\newtheorem{theorem}{Theorem}
\newtheorem{proposition}[theorem]{Proposition}
\theoremstyle{definition}
\newtheorem{remark}[theorem]{Remark}
\theoremstyle{remark}
 \def\Z{{\mathbb{Z}}}
 \def\C{{\mathbb{C}}}
  \def\D{{\mathbb{D}}}
\begin{document}

\newenvironment{prooff}{\medskip \par \noindent {\it Proof}\ }{\hfill
$\square$ \medskip \par}
    \def\sqr#1#2{{\vcenter{\hrule height.#2pt
        \hbox{\vrule width.#2pt height#1pt \kern#1pt
            \vrule width.#2pt}\hrule height.#2pt}}}
    \def\square{\mathchoice\sqr67\sqr67\sqr{2.1}6\sqr{1.5}6}
\def\pf#1{\medskip \par \noindent {\it #1.}\ }
\def\endpf{\hfill $\square$ \medskip \par}
\def\demo#1{\medskip \par \noindent {\it #1.}\ }
\def\enddemo{\medskip \par}
\def\qed{~\hfill$\square$}

 \title[]
 {Open book decompositions of links of quotient surface singularities and support genus problem}
\begin{center}
\end{center}

 \author{Elif Dalyan}

 \address{Department of Mathematics, Hitit University,
 Corum, Turkey} \email{elifdalyan@hitit.edu.tr, elif@elifyilmaz.com.tr}

 \date{\today}
 \keywords{Quotient surface singularities, open book decomposition, contact structure, Milnor genus, support genus}

\begin{abstract}
In this paper we write explicitly the open book decompositions of links of quotient surface singularities 
supporting the corresponding unique Milnor fillable contact structure. The page-genus of these Milnor open books 
are minimal among all Milnor open books supporting the same contact structure. We also investigate whether the Milnor genus is equal to the support genus for links of quotient surface singularities. We show that for many types of the quotient surface singularities the Milnor genus is
equal to the support genus. In the remaining cases we are able to find a small upper bound for the support genus.
\end{abstract}

 \maketitle
  \setcounter{secnumdepth}{2}
 \setcounter{section}{0}

\section{Introduction}

The purpose of this paper is to construct the Milnor open book decompositions of the links of quotient surface singularities supporting the unique Milnor fillable contact structure. By the work of Bhupal--Alt{\i}nok \cite{ab} and by Nemethi--Tosun \cite{nt},
the page-genus of our Milnor open book is minimal among all Milnor open books supporting the same contact structure, i.e. it gives the Milnor genus. In \cite{bo2}, it is shown that for some examples of rational surface singularities Milnor genus is not equal to the support genus. However, if we restrict ourself to quotient surface singularities, the question whether the Milnor genus
is equal to the support genus for the canonical contact structure
is still unknown. For most cases of the quotient surface singularities, we provide planar Milnor open books,
so that for these
types the Milnor genus is equal to the support genus. In all remaining cases, the Milnor genus turns out
 to be one. Hence, the
support genus of the corresponding contact structure is at most one. We are able to show that for
some of these quotient surface singularities the Milnor genus is equal to the support genus, which is one.

Our main result is the following theorem.

\begin{theorem}\label{mainth}
The unique Milnor fillable contact structure on the link of the quotient surface singularities has support
genus one for each singularities of the following types:
\begin{itemize}
	\item Tetrahedral part $(i)$ where $b = 2$ (cf. Figure~\ref{tetrahedral}).
	\item Octahedral part $(i)$ where $b = 2$(cf. Figure~\ref{octahedral}).
	\item Icosahedral part $(i)$ and $(ii)$ where $b = 2$. (cf. Figure~\ref{icosahedral1}).
\end{itemize}	
The support genus is zero for singularities of the following types:
\begin{itemize}
	\item Cyclic (cf. Figure~\ref{cyclic}).
	\item Dihedral, $b>2$ (cf. Figure~\ref{dihedral}).
	\item Tetrahedral, $b>2$ (cf. Figure~\ref{tetrahedral}).
	\item Octahedral, $b>2$ (cf. Figure~\ref{octahedral}).
	\item Icosahedral, $b>2$ (cf. Figure~\ref{icosahedral1}).
\end{itemize}	
For the remaining cases, the corresponding contact structures have support genus at most one.
\end{theorem}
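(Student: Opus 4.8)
The plan is to prove the theorem in two movements: first producing explicit Milnor open books whose pages realize the claimed genera, and then, for the genus-one families, establishing that this genus cannot be lowered by \emph{any} supporting open book.

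For the upper bounds I would start from the star-shaped resolution graphs recorded in the referenced figures and apply the algorithm of Bhupal--Alt{\i}nok \cite{ab} and N\'emethi--Tosun \cite{nt} to write down a Milnor open book supporting the canonical contact structure $\xi$, then read the page genus directly off the graph. For the cyclic, dihedral $(b>2)$, tetrahedral $(b>2)$, octahedral $(b>2)$ and icosahedral $(b>2)$ families the resulting page is planar, so the Milnor genus is $0$; since every Milnor open book supports $\xi$ and the support genus $\mathrm{sg}(\xi)$ is the minimum page genus over \emph{all} supporting open books, exhibiting one planar supporting open book already forces $\mathrm{sg}(\xi)=0$. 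For the four distinguished $b=2$ families (and for all remaining cases) the same computation yields Milnor genus $1$, and the genus-one open book we constructed immediately gives the upper bound $\mathrm{sg}(\xi)\le 1$.

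The substance of the theorem is the matching lower bound $\mathrm{sg}(\xi)\ge 1$ for the four $b=2$ families, i.e. the assertion that $\xi$ on these links is \emph{not} planar. Here I would argue by contradiction using a filling obstruction. The minimal resolution of the singularity is a Stein, hence strong symplectic, filling $(W,\omega)$ of $(Y,\xi)$ whose intersection form is the negative definite plumbing lattice $Q$ of the resolution graph. By Etnyre's planarity theorem, if $\xi$ carried a planar open book then every symplectic filling of $(Y,\xi)$ would be negative definite with $b_1=0$; capping $W$ off inside a closed negative definite $4$-manifold and invoking Donaldson's diagonalization theorem then forces $Q$ to embed into a standard diagonal lattice $\langle -1\rangle^{N}$ for some $N$. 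It therefore suffices to prove that the plumbing lattice of each of the four $b=2$ graphs admits no such embedding into any diagonal lattice.

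The main obstacle is precisely this lattice-embedding computation, which is where the special arithmetic of the $b=2$ tetrahedral, octahedral and icosahedral graphs enters. I would examine the vectors of small norm in $Q$ and the constraints that an embedding $\iota\colon Q\hookrightarrow\langle -1\rangle^{N}$ places on the images of the generators associated to the vertices (for instance a self-intersection $-2$ class must map to $\pm e_i\pm e_j$, a $-3$ class to $\pm e_i\pm e_j\pm e_k$, and so on), and then show that the adjacency relations imposed by the star-shaped graph cannot all be realized simultaneously; this is the point at which the $E$-type combinatorics obstruct diagonalization, in contrast to the $b>2$ neighbors where it does not. Two points require care: one must confirm that the relevant links are rational homology spheres so that Etnyre's hypotheses and the capping construction apply, and one must check that the obstruction is sharp enough to separate the genuinely non-planar $b=2$ cases from the planar $b>2$ ones. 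Once non-embeddability is established, $\xi$ is non-planar, whence $\mathrm{sg}(\xi)\ge 1$, and combined with the upper bound this yields $\mathrm{sg}(\xi)=1$.
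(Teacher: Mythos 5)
Your proposal is correct and follows essentially the same route as the paper: explicit Milnor open books from the resolution graphs give the upper bounds (planar for the $b>2$ and cyclic families, genus one otherwise), and the lower bound for the four $b=2$ families comes from Etnyre's planarity obstruction reducing the question to the non-existence of an isometric embedding of the plumbing lattice into a diagonal lattice $\langle -1\rangle^{N}$. The paper carries out that last step concretely via the essentially unique embedding of the $D_4$ sublattice and a contradiction from a vertex adjacent to one $D_4$ leaf but not another, which is exactly the small-norm/adjacency analysis you outline.
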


For the cyclic singularity, the planar open book decompositions were constructed in \cite{s}.
For icosahedral singularity of part $(i)$ where $b=2$, it was shown in \cite{e} that contact
structure cannot be supported by a planar open book decomposition. Moreover,  it was shown in \cite{b} and \cite{eo} that
this singularity has a genus--one open book decomposition supporting that contact structure.
\vspace{0.5cm}

\textbf{Acknowledgements:} This paper is a part of my Ph.D. thesis \cite{y} at Middle East Technical University.
Special thanks to Kaoru Ono for proposing this question, to Andr\'{a}s Stipsicz, Mohan Bhupal and Mustafa Korkmaz for their helpful comments.

\section{Preliminaries} \label{sec:qssing}

\subsection{Quotient Surface Singularities}
We study the quotient singularities $\C^2 / G $, where $G$ is a finite subgroup of $GL(2,\C)$. Brieskorn~\cite{br} described the possible minimal resolutions for these singularities, by using earlier result of Prill~\cite{p}. These singularities are classified into five groups, namely cyclic quotient singularities, dihedral singularities, tetrahedral singularities, octahedral singularities and icosahedral singularities.
We give the minimal resolution graphs of these singularities, and use it in our proof. The reader is referred to \cite {bo} for more details.

\begin{enumerate}
	\item[$\bullet$] \textbf{Cyclic Quotient Singularities:} $A_{n,q}$, where $0<q<n$ and $\gcd(n,q)=1$. The minimal resolution graph
of $A_{n,q}$ is given in Figure~\ref{cyclic}, where $b_{i}$ are defined by the continued fraction
 \[
 \frac{n}{q}= \left[b_{1},b_{2},\ldots ,b_{r} \right]= b_{1}- \frac{1}{b_{2}-\frac{1}{\ddots-\frac{1}{b_{r}}}}
 \]
 with $b_{i} \geq 2$ for all $i$.
\begin{figure}[h]

    \includegraphics[width=9cm]{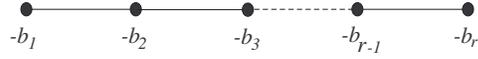}
 \caption{Cyclic quotient singularity.}
  \label{cyclic}

 \end{figure}

\item[$\bullet$]
\textbf{Dihedral Quotient Singularities:} The minimal resolution graph of a dihedral quotient singularity is
given in Figure~\ref{dihedral}, where $b\geq 2$ and $ b_{i}\geq 2$.
 \begin{figure}[hbt]
 \begin{center}
    \includegraphics[width=6cm]{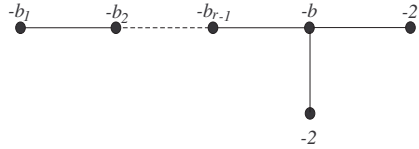}
  \caption{Dihedral quotient singularities.}
  \label{dihedral}
   \end{center}
 \end{figure}

\item[$\bullet$]
\textbf{Tetrahedral Singularities:} The minimal resolution  graph of a tetrahedral singularity is
given in Figure~\ref{tetrahedral}, where $b\geq 2$.
 \begin{figure}[hbt]
 \begin{center}
    \includegraphics[width=8cm]{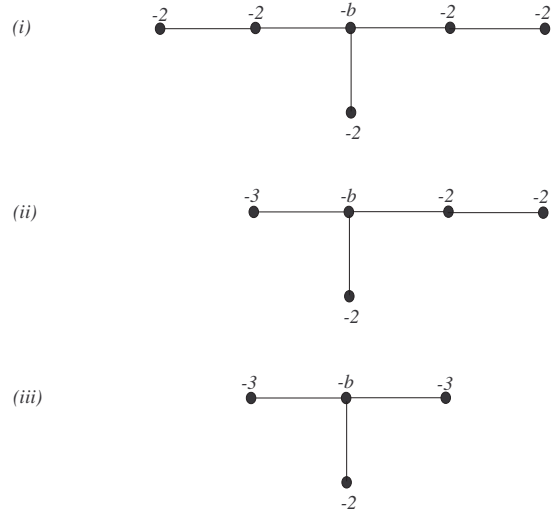}
   \caption{Tetrahedral singularities.}
  \label{tetrahedral}
   \end{center}
 \end{figure}

\item[$\bullet$]
\textbf{Octahedral Quotient Singularities:} The minimal resolution  graph of a octahedral quotient singularity is
of the form given in Figure~\ref{octahedral}, where $b\geq 2$.
 \begin{figure}[hbt]
 \begin{center}
    \includegraphics[width=9cm]{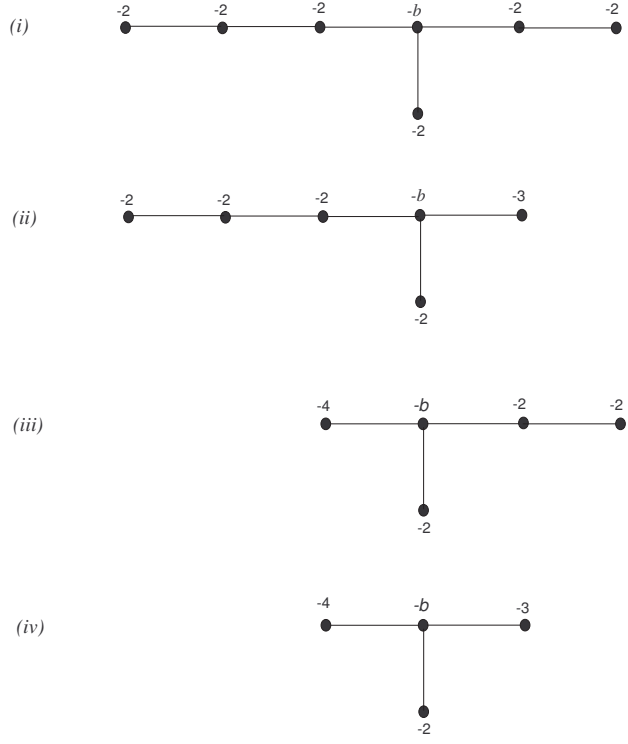}
  \caption{Octahedral quotient singularities.}
  \label{octahedral}
   \end{center}
 \end{figure}

\item[$\bullet$]
\textbf{Icosahedral Quotient Singularities:}
The minimal resolution graph of a icosahedral quotient singularity is
of the form given in Figure~\ref{icosahedral1}, where $b\geq 2$.
  \begin{figure}[hbt]
 \begin{center}
    \includegraphics[width=8cm]{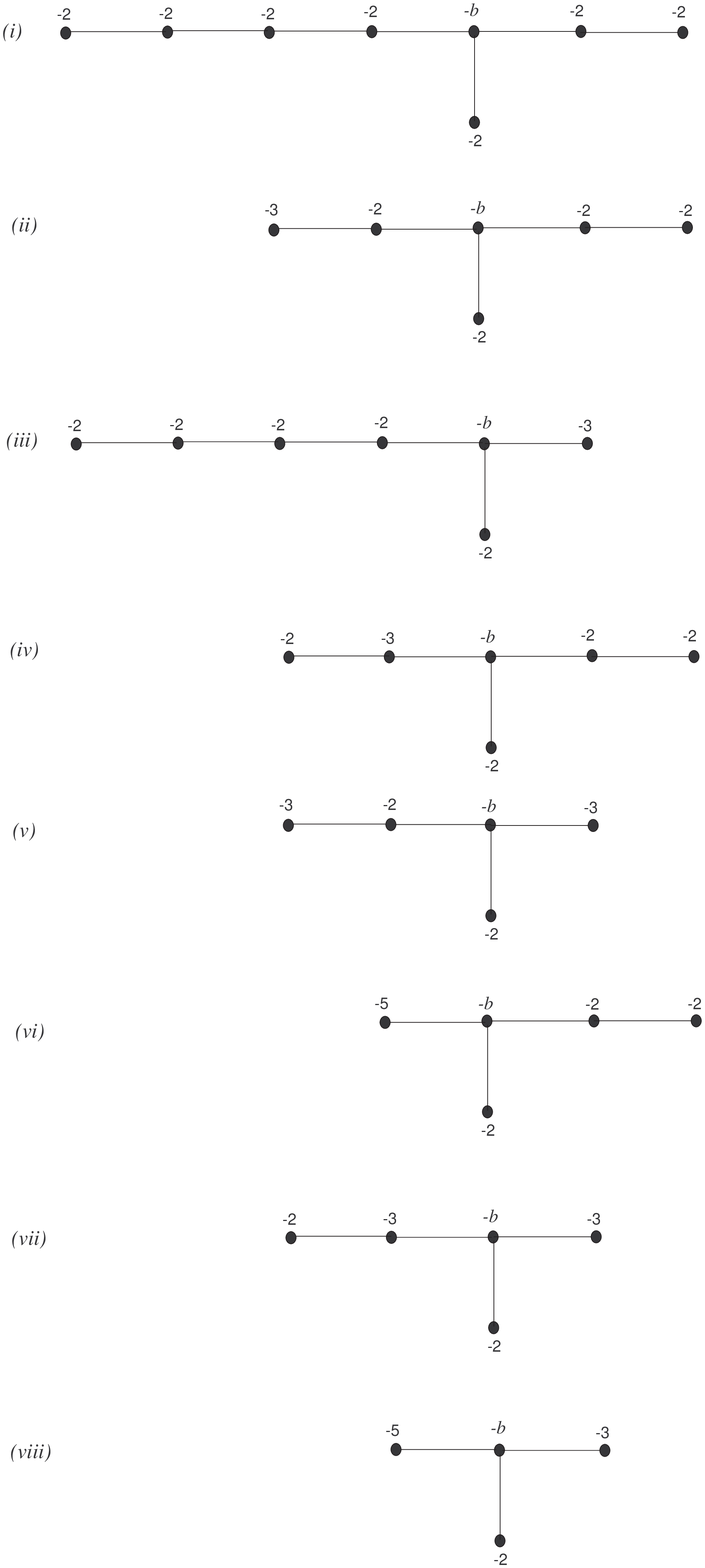}
  \caption{Icosahedral quotient singularities.}
  \label{icosahedral1}
   \end{center}
 \end{figure}
\end{enumerate}

\subsection{Mapping class groups}
The mapping class group $MCG(\Sigma)$ of a compact connected orientable surface $\Sigma$
is defined as the group of isotopy classes of orientation--preserving
self--diffeomorphisms of $\Sigma$, where diffeomorphisms
and isotopies of $\Sigma$ are assumed to be the identity on the boundary. The group $MCG(\Sigma)$
is generated by Dehn twists.

We need the following torus relations. These relations can be obtained from the well known one--holed torus relation by using the lantern and braid relations. The reader is referred to \cite{ko} for the details. For the curves in the relations see the appropriate picture in Figure~\ref{relations}.
\begin{figure}[ht]
    \includegraphics[width=12cm]{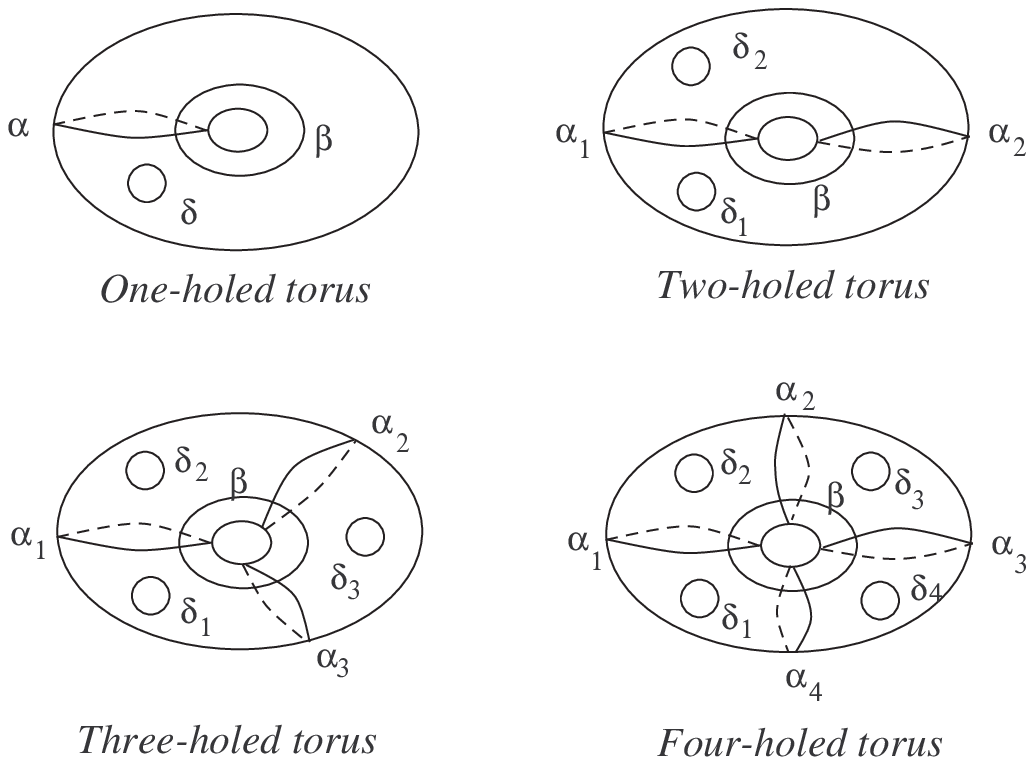}
 \caption{Curves of torus relations.}
  \label{relations}
 \end{figure}

One--holed torus relation is
\begin{equation} \label{1holed}
t_{\delta}  = (t_{\alpha} t_{\beta})^{6} .
\end{equation}

Two--holed torus relation is
\begin{equation} \label{2holed1}
t_{\delta_{1}} t_{\delta_{2}}  = (t_{\alpha_{1}} t_{\alpha_{2}} t_{\beta})^4,
\end{equation}
or by using braid relations, equivalently we can write
\begin{equation} \label{2holed2}
t_{\delta_{1}} t_{\delta_{2}}  = (t_{\alpha_{1}} t_{\alpha_{2}}t_{\alpha_{2}} t_{\beta})^{3},
\end{equation}
or
\begin{equation} \label{2holed3}
t_{\delta_{1}} t_{\delta_{2}}  =  (t_{\alpha_{1}} t_{\alpha_{2}} t_{\beta} t_{\alpha_{2}}t_{\alpha_{2}} t_{\beta})^{2}.
\end{equation}

Three--holed torus relation is
\begin{equation} \label{3holed1}
t_{\delta_{1}} t_{\delta_{2}} t_{\delta_{3}} = (t_{\alpha_{1}} t_{\alpha_{2}} t_{\alpha_{3}} t_{\beta})^{3} .
\end{equation}
or
\begin{equation} \label{3holed2}
t_{\delta_{1}} t_{\delta_{2}} t_{\delta_{3}} = (t_{\alpha_{1}} t_{\alpha_{3}} t_{\beta} t_{\alpha_{2}} t_{\alpha_{3}} t_{\beta})^{2},
\end{equation}

Four--holed torus relation is
\begin{equation} \label{4holed}
t_{\delta_{1}} t_{\delta_{2}} t_{\delta_{3}} t_{\delta_{4}} = (t_{\alpha_{1}} t_{\alpha_{3}} t_{\beta} t_{\alpha_{2}} t_{\alpha_{4}} t_{\beta})^{2} .
\end{equation}

The next theorem was proved by C. Bonatti and L. Paris (c.f \cite{bp}, Theorem 3.6). It will be useful for us in writing the roots of elements in the mapping class group of a torus with boundary.

\begin{theorem}\label{mcg}
If $\Sigma$ is a torus with non-empty boundary components, then each element $f$ in $MCG(\Sigma)$ has at most one $m$-root up to conjugation for all $m\geq 1$.

\end{theorem}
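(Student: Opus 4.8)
The plan is to reduce the uniqueness of roots to a study of centralizers inside $MCG(\Sigma)$, exploiting the fact that for a torus with boundary this group is a central extension of a $PSL(2,\Z)$--type group. The one elementary but decisive observation is that every $m$-th root is forced into a centralizer: if $g^m=f$ then $g$ commutes with $f=g^m$, so every $m$-th root of $f$ lies in the centralizer $C(f)$. It therefore suffices to analyze the solutions of $g^m=f$ inside $C(f)$, and the theorem will follow once the structure of $C(f)$ is understood for each Nielsen--Thurston type of $f$.

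I would treat first the model case of a torus $\Sigma$ with a single boundary component, where $MCG(\Sigma)\cong B_3$ and, by the one-holed torus relation \eqref{1holed}, the boundary twist is $t_{\delta}=(t_{\alpha}t_{\beta})^{6}$. The center is the infinite cyclic group $\langle w\rangle$ with $w=(t_{\alpha}t_{\beta})^{3}$ and $w^{2}=t_{\delta}$, and the quotient is $MCG(\Sigma)/\langle w\rangle\cong PSL(2,\Z)\cong \Z/2\Z * \Z/3\Z$, acting on its Bass--Serre tree; the type of $f$ is read off from the trace of its image in $SL(2,\Z)$. For each non-central $f$ I would compute $C(f)$ from the image $\bar f$ in $PSL(2,\Z)$: if $\bar f$ has finite order (periodic $f$) then its centralizer is the cyclic vertex group containing it and $C(f)$ is the infinite cyclic preimage of that group; if $\bar f$ has infinite order (reducible or pseudo-Anosov $f$) then $C(\bar f)$ is infinite cyclic and the central extension $1\to\Z\to C(f)\to\Z\to 1$ splits, so $C(f)\cong\Z^{2}$. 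In every non-central case $C(f)$ is thus torsion-free abelian, whence $g_{1}^{m}=g_{2}^{m}=f$ gives $(g_{1}g_{2}^{-1})^{m}=1$ and so $g_{1}=g_{2}$: the root is in fact unique, a fortiori unique up to conjugacy.

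The remaining, genuinely delicate, case is when $f$ is central, $f=w^{k}$, and this is the only situation in which conjugation is actually required. Here $C(f)=MCG(\Sigma)$ is the whole non-abelian group, so the torsion-free-abelian argument collapses. Instead I would pass to the quotient: from $g^{m}=w^{k}$ we get $\bar g^{m}=1$ in $PSL(2,\Z)$, so $\bar g$ is a torsion element, hence trivial or conjugate into one of the cyclic factors $\Z/2\Z$ or $\Z/3\Z$. Up to conjugacy there are then only finitely many possibilities for $\bar g$, and I would pin each one down using the two invariants that any root must match: the conjugacy class of $\bar g$ and the exponent-sum homomorphism $e\colon MCG(\Sigma)\to\Z$ from abelianization (with $e(t_{\alpha})=e(t_{\beta})=1$, so $e(w)=6$ and $e(f)=6k$). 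Since $e(g)=6k/m$ is determined and the central correction $w^{j}$ is fixed by comparing exponent sums, these invariants together single out the conjugacy class of $g$, yielding uniqueness up to conjugacy (and incidentally showing that central elements often admit no root at all). For a torus with several boundary components the same strategy applies once the center is replaced by the free abelian group containing the boundary twists and the quotient by the corresponding $SL(2,\Z)$--type group.

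The main obstacle is precisely this central case: the whole group becomes the centralizer, the naive abelian argument fails, and uniqueness up to conjugacy must be extracted from the interplay between the finitely many conjugacy classes of torsion in the quotient and the abelianization invariant. A secondary technical point, needed to keep the non-central analysis uniform when there is more than one boundary component, is the near-cyclicity of centralizers of pseudo-Anosov classes, guaranteed by the results of McCarthy and of Fathi--Laudenbach--Po\'enaru.
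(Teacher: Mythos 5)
First, note that the paper itself offers no proof of this statement: it is quoted verbatim from Bonatti--Paris \cite{bp}, so your proposal is being measured against their argument rather than anything in the text. Your reduction to centralizers and your treatment of the one-holed torus are essentially sound: there $MCG(\Sigma)\cong B_3$, the centralizer of any non-central element is indeed torsion-free abelian (namely $\langle t_{\alpha}t_{\beta}\rangle$, $\langle t_{\alpha}t_{\beta}t_{\alpha}\rangle$, or a copy of $\Z^{2}$ according to whether the image in $PSL(2,\Z)\cong\Z/2\Z*\Z/3\Z$ has order $3$, order $2$, or infinite order), so non-central elements have literally unique $m$-th roots, and the central case reduces to the finitely many conjugacy classes of torsion in the quotient, separated by the exponent-sum homomorphism. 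Carried out in full, that is a legitimate self-contained proof for one boundary component.

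The genuine gap is the last sentence, where you assert that ``the same strategy applies'' to a torus with several boundary components. It does not, and the paper actually needs the theorem for tori with up to four or five boundary components (see the $b=2$ cases of Propositions~\ref{prop:tetrahedral}--\ref{prop:icosahedral}). For $n\geq 2$ the quotient of $MCG(\Sigma_{1,n})$ by the central subgroup of boundary twists is not a ``$PSL(2,\Z)$-type group'': by the Birman exact sequence it contains a nonabelian free group arising from point-pushing, and, more to the point, your structural dichotomy fails. Concretely, in the two-holed torus take $f=t_{c}$ for a non-separating curve $c$; this $f$ is not central, yet $C(f)$ is the stabilizer of $c$, which contains the mapping class group of the complementary four-holed sphere and is therefore nonabelian (two lantern-type twists about curves intersecting essentially already fail to commute). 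So the torsion-free-abelian argument collapses precisely in the reducible non-central case, which is the case Bonatti--Paris must handle via canonical reduction systems; the McCarthy and Fathi--Laudenbach--Po\'enaru results you invoke control only centralizers of pseudo-Anosov classes. As written, your argument proves the theorem only for the one-holed torus.
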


\section{Construction of Milnor open books} \label{costruction}

We write the minimal page-genus Milnor open book decompositions of the links of the quotient surface singularities with the help of the minimal resolution graphs given in Section~\ref{sec:qssing}. We will give a recipe for the construction of these open books. The reader is referred to~\cite{b} for a detailed explanation.

 Let $\Gamma$ be one of the graphs given in Section~\ref{sec:qssing}. The intersection matrix $I(\Gamma)$
 of $\Gamma$ is the negative definite symmetric matrix defined as follows: First label the vertices of
 $\Gamma$ as $A_1,A_2,\ldots,A_q$.  \textit{We index the vertices of the graph starting from left to right and then index the bottom vertex if exists.}
 The $(i,i)$ entry of $I(\Gamma)$ is the weight associated
 to $A_i$. For $i\neq j$, the $(i,j)$ entry is defined as $1$ (resp. $0$) if $A_i$ is connected
 (resp. not connected) to $A_j$.

 In order to construct our open book, we first find $1\times q$ integer matrices
$\underline{m}=\left[
                 \begin{array}{cccc}
                   m_1  & m_2 &\cdots & m_q \\
                 \end{array}
               \right]$
and
$\underline{n}=\left[
                 \begin{array}{cccc}
                   n_1  & n_2 &\cdots & n_q \\
                 \end{array}
               \right]$
satisfying
\begin{equation}  \label{first}
I(\Gamma)\, \underline{m}^{t}=-\underline{n}^{t}.
\end{equation}
We choose $\underline{m}$ in such a way that $m_i$ are the smallest
possible positive integers so that $n_{i}\geq 0$ for all $i$.
Here, $\underline{m}^t$ denotes the transpose
of the matrix $\underline{m}$.

The page $\Sigma$ of the open book associated to $\underline{m}$ and $\underline{n}$
satisfying the equality~\eqref{first} is a union of the following pieces: A collection of
surfaces $F_{i}$, for $i=1,\ldots,q$; annuli $U^{i}_{t}$, for $i=1,\ldots,q$, $t=1,\ldots,n_{i}$;
and a collection of annuli $U^{i,j}_{l}$, $l=1,\ldots,\gcd(m_{i},m_{j})$ for each pair $(i,j)$
with $1\leq i < j\leq q$ such that $(A_{i},A_{j})\in \mathcal{E} $, where $\mathcal{E}$ denotes
the set of edges of the graph $\Gamma $.

We determine the surface $F_i$ as follows:
For the vertex $A_{i}$ with valency $v_{i}$, $F_{i}$ is an $m_{i}$--cover of the sphere with $v_{i}+ n_{i}$ boundary components. The genus $g(F_i)$ of $F_i$ is determined by the followings:
If $n_{i} > 0$ then the surface $F_{i}$ is connected and
\begin{equation}  \label{second}
2-2g(F_{i})- \sum_{(A_{i},A_{j})\in \mathcal{E}} \gcd{(m_{i},m_{j})}-n_{i}=m_{i}(2-v_{i}-n_{i}),
\end{equation}
from which we obtain
\begin{equation}  \label{third}
g(F_{i})= 1+ \frac{m_{i}(v_{i}+n_{i}-2)- \sum_{(A_{i},A_{j})\in \mathcal{E}} \gcd{(m_{i},m_{j})}-n_{i}}{2}.
\end{equation}
If $n_{i}=0$ then $F_{i}$ has $d_{i}= \gcd{(\left\{m_{i}\right\}\cup \left\{m_{j}|(A_{i},A_{j})\in \mathcal{E}\right\})}$ connected components and the genus of these components $F^{s}_{i}$, $s=1,\ldots, d_{i}$
is calculated as
\begin{equation}  \label{fourth}
g(F^{s}_{i})= 1+ \frac{ ( m_{i} / d_{i}) (v_{i}-2)- \sum_{(A_{i},A_{j})\in \mathcal{E}} \gcd{(m_{i},m_{j})} / d_{i}}{2}.
\end{equation}

The number of boundary components of $F_{i}$ is
\[
n_{i}+ \sum_{(A_{i},A_{j})\in \mathcal{E}} \gcd{(m_{i},m_{j})}.\]
 After gluing those surfaces according to the graph $\Gamma$, we end up with the page $\Sigma$ of the open book decomposition.

For a simple closed curve $a$ on an oriented surface, let us denote by $t_a$ the right Dehn twist about $a$. In order to find the monodromy $\phi$ of the open book decomposition, we first find the monodromy restricted to the annuli $U^{i}_{t}$ and $U^{i,j}_{l}$'s which make up the page $\Sigma$ together with the surfaces $F_{i}$'s.  The monodromy $\phi$ restricted to the annulus $U^{i}_t$ is given by
\[
\left(\phi|_{U^{i}_{t}}  \right)^{m_{i}}=t_{\delta^{i}_{t}},
\]
for $i=1,\ldots, N$, where $\delta^{i}_{t}$ is the core circle of $U^{i}_{t}$ and, hence, it is parallel to the boundary components of the page $\Sigma$.
The monodromy restricted to annulus $U^{i,j}_{l}$ is given by
\[
\left( \phi|_{U^{i,j}_{l}} \right)  ^{m_{i}m_{j}/\gcd{(m_{i},m_{j})}}= t_{c_{j-1}},
\]
where $c_{j-1}$ is the core of the annulus  $U^{i,j}_{l}$. We glue these diffeomorphisms to get the monodromy $\phi:\Sigma \to \Sigma$ of the open book decomposition.

Let us now construct the open book explicitly for all types of singularities.

\section{Milnor open book decompositions} \label{lemmas}
In this section we give the whole list of Milnor open book decompositions supporting
the corresponding unique Milnor fillable contact structure on the links of quotient surface singularities.

\subsection{Cyclic quotient singularities}
We start by investigating the cyclic quotient singularities. We construct the Milnor open book supporting the unique Milnor fillable contact structure by following the construction steps explained in the previous section. This is the easiest part and the open books turned to be planar.
\begin{proposition} \label{prop:cyclic} {\rm \textbf{(Cyclic quotient singularities)}}
The unique Milnor fillable contact structure on the link of a cyclic quotient surface
singularity is supported by a planar open book decomposition with $N=b_{1}+b_{2}+ \cdots +b_{r}- 2(r-1)$
boundary components. The monodromy of the open book is
\[
\phi=(t_{\delta^{1}_{1}}\cdots t_{\delta^{1}_{b_{1}-1}}) \cdots ( t_{\delta^{i}_{1}}\cdots t_{\delta^{i}_{b_{i}-2}} ) \cdots  ( t_{\delta^{r}_{1}}\cdots t_{\delta^{r}_{b_{r}-1}} ) ( t_{c_{1}}\cdots t_{c_{r-1}} ),
\]
where $i=2,\ldots,r-1$.

\end{proposition}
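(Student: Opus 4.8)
The plan is to run the construction recipe of Section~\ref{costruction} explicitly for the chain graph of $A_{n,q}$ and read off the three assertions (planarity, boundary count, monodromy) directly from the combinatorial data. The entire argument hinges on determining the vectors $\underline{m}$ and $\underline{n}$ of equation~\eqref{first}; everything else is a transcription of the recipe.

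First I would write down the intersection matrix. For the graph in Figure~\ref{cyclic} the vertices $A_1,\dots,A_r$ form a chain with weights $-b_1,\dots,-b_r$, so $I(\Gamma)$ is tridiagonal with $-b_i$ on the diagonal and $1$ on the two off-diagonals. With the convention $m_0=m_{r+1}=0$, equation~\eqref{first} reads
\[
b_i m_i - m_{i-1} - m_{i+1} = n_i \qquad (1\le i\le r).
\]
I claim the componentwise-smallest positive solution is $m_i=1$ for every $i$. Substituting $m_i\equiv 1$ yields $n_1=b_1-1$, $n_r=b_r-1$, and $n_i=b_i-2$ for $2\le i\le r-1$; since each $b_i\ge 2$ these are all nonnegative, and because the $m_i$ are positive integers the all-ones vector is already minimal. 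This is the one place where a little care is required: the interior entries $n_i$ vanish precisely when $b_i=2$, which forces the case split in the genus formulas below. (For the degenerate case $r=1$ the single isolated vertex has $v_1=0$ and $n_1=b_1$; I would treat it as an immediate special case, since the stated product then has no edge factors.)

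Next I would identify the pieces of the page for $r\ge 2$, where $v_1=v_r=1$ and $v_i=2$ for interior $i$. Because every $m_i=1$, each surface $F_i$ is a one-fold (trivial) cover of a holed sphere, hence planar. When $n_i>0$ this is confirmed by~\eqref{third}: using $\gcd(m_i,m_j)=1$ on each of the $v_i$ edges gives $g(F_i)=1+\tfrac{(v_i+n_i-2)-v_i-n_i}{2}=0$, uniformly in $v_i$. When $n_i=0$ (the interior case $b_i=2$), formula~\eqref{fourth} with $d_i=1$ gives $g(F_i)=1+\tfrac{(2-2)-2}{2}=0$ as well. Since the graph is a chain, the $F_i$ are glued along the edge-annuli $U^{i,j}$ in a tree pattern, which creates no cycles and so adds no genus; thus the page $\Sigma$ is planar. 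Its boundary components are exactly the cores $\delta^i_t$ of the annuli $U^i_t$, so their number is
\[
N=\sum_{i=1}^r n_i=(b_1-1)+(b_r-1)+\sum_{i=2}^{r-1}(b_i-2)=\sum_{i=1}^r b_i-2(r-1),
\]
as asserted.

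Finally I would read off the monodromy. Since $m_i=1$, the relation $(\phi|_{U^i_t})^{m_i}=t_{\delta^i_t}$ becomes a single right Dehn twist $t_{\delta^i_t}$ on each boundary annulus, and since $\gcd(m_i,m_j)=1$ and $m_im_j/\gcd(m_i,m_j)=1$, each edge contributes a single annulus with monodromy the one twist $t_{c_{j-1}}$. Multiplying these, grouping the $\delta^i_t$-twists vertex by vertex ($b_1-1$ at $A_1$, $b_i-2$ at each interior $A_i$, and $b_r-1$ at $A_r$) and collecting the $r-1$ edge twists $t_{c_1},\dots,t_{c_{r-1}}$, gives precisely the stated product for $\phi$. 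I expect no serious obstacle here: the only genuine content is the determination of $\underline{m}$, and because the page is planar there is no need to invoke Theorem~\ref{mcg} or the torus relations, which will instead be the crux of the later, higher-genus cases.
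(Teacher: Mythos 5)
Your proposal is correct and follows essentially the same route as the paper: take $\underline{m}=(1,\dots,1)$, read off $\underline{n}=(b_1-1,b_2-2,\dots,b_{r-1}-2,b_r-1)$, observe each $F_i$ is a sphere with $b_i$ boundary components glued in a chain, and transcribe the monodromy directly from the annulus data since all $m_i=1$. The extra care you take with minimality of $\underline{m}$, the $n_i=0$ subcase, and the tree-gluing argument for planarity only makes the argument more complete than the paper's version.
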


\begin{proof}
The intersection matrix $I(\Gamma)$ of the cyclic quotient surface singularity is
 \[
\begin{bmatrix}
  -b_{1} & 1     &   0     &   \cdots &   0&   0 & 0\\
   1     & -b_{2}&   1     &   \cdots  &   0&   0   & 0  \\
   0     & 1     & -b_{3}&     \cdots &   0 &   0   & 0  \\
     \vdots    & \vdots&  \vdots &  \vdots& \vdots &   \vdots & \vdots  \\
 0 & 0 &   0  &   \cdots  & 1       & -b_{r-1} &  1 \\
  0      & 0&   0 &  \cdots   &   0   &   1      &  -b_{r}\\
\end{bmatrix}.
\]
Consider the $r$-tuple of integers $\underline{m}=(1,1,\ldots,1)$, which gives the fundamental cycle of the resolution (We see $\underline{m}$ as a matrix). Then we find that $n_1=b_1-1,n_r=b_r-1$ and $n_i=b_i-2$
for $i=2,3,\ldots, r-1$.
The page $\Sigma$ of the open book associated to $\underline{m}$ consists of the following pieces. A collection of surfaces $F_{i}$, annuli $U^{i}_{t}$ for each binding component of the open book and a collection of annuli $U^{i,j}_{l}$ connecting the surfaces $F_{i}$ and  $F_{j}$. Notice that the equations (\ref{third}) and (\ref{fourth}) become the same in both cases $n_{i} > 0$ and $n_{i}=0$. By using either of these equations, we find that $g(F_{i})=0$ for all $i$. The number of boundary components of $F_{i}$ is $n_{i} + \sum_{(A_{i},A_{j})\in \mathcal{E}} \gcd{(m_{i},m_{j})}$. It follows that each $F_{i}$ is a sphere with $b_{i}$ boundary components.

Next, we glue the annuli to the $F_{i}$'s. For each $1\leq i\leq r-1$, glue the annulus $U^{i,i+1}_{1}$ to
$F_i$ and $F_{i+1}$ to get a connected surface (cf. Figure~\ref{pagecyclic}).
There are $n_{i}$ annuli $U^{i}_{t}$ which are not used to plumb the surfaces $F_i$. They will give the
binding components of the open book. As seen in Figure \ref{pagecyclic}, the page $\Sigma$ is a sphere with $N$ boundary components, where
\begin{eqnarray*}
  N &=& n_1+n_2+\cdots +n_r \\
   &=& b_{1}+b_{2}+ \cdots +b_{r}- 2(r-1).
\end{eqnarray*}

In order to find the monodromy $\phi$, we only need to find $\phi|_{U^{i}_{t}}$ and $\phi|_{U^{i,j}_{l}}$. We know that $(\phi|_{U^{i}_{t}})^{m_{i}}=t_{\delta^{i}_{t}}$, for $i=1,\ldots, N$, where $\delta^{i}_{t}$ are the core circles of $U^{i}_{t}$. We find that the monodromy restricted to each annulus $U^{i,j}_{l}$ is given by  $(\phi|_{U^{i,j}_{l}})^{m_{i}m_{j}/\gcd{(m_{i},m_{j})}}= t_{c_{j-1}}$, where $c_{j-1}$ is the core of the annulus  $U^{i,j}_{l}$ (cf. Figure \ref{pagecyclic}). Since $m_i=1$, we have
\begin{enumerate}
  \item[$\bullet$] $\phi|_{U^{1}_{j}}= t_{\delta^{1}_{j}}$ for $j=1,\ldots,b_{1}-1$,
  \item[$\bullet$] $\phi|_{U^{i}_{j}}= t_{\delta^{i}_{j}}$ for $i=2, \ldots , r-1$ and $j=1, \ldots ,b_{i}-2$,
  \item[$\bullet$] $\phi|_{U^{r}_{j}}= t_{\delta^{r}_{j}}$ for $j=1, \ldots , b_{r}-1$, and
  \item[$\bullet$] $\phi|_{U^{i,i+1}_{1}}= t_{c_{i}}$ for $i=1,\ldots,r-1$.
\end{enumerate}
By gluing these maps, we find that the monodromy is
\[
\phi=(t_{\delta^{1}_{1}}\cdots t_{\delta^{1}_{b_{1}-1}}) \cdots ( t_{\delta^{i}_{1}}\cdots t_{\delta^{i}_{b_{i}-2}} ) \cdots  ( t_{\delta^{r}_{1}}\cdots t_{\delta^{r}_{b_{r}-1}} ) ( t_{c_{1}}\cdots t_{c_{r-1}} ),
\]
where $i=2,\ldots,r-1$.

\end{proof}

  \begin{figure}[hbt]
 \begin{center}
    \includegraphics[width=9cm]{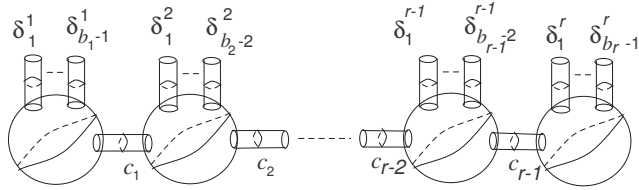}
  \caption{The page for cyclic singularities.}
  \label{pagecyclic}
   \end{center}
 \end{figure}

\subsection{Dihedral quotient singularities}
First, we consider the dihedral quotient surface singularity.
If $b_1 =b_2= \cdots = b_{r-1}=b=2$, then the singularity is a simple surface
singularity. In~\cite{b}, Bhupal computes the open book decomposition:
The page of the open book is a one--holed torus and the monodromy is
$\left( t_{\alpha}t_{\beta} \right)^3 \left( t_{\alpha} \right)^{r-2}$. In the following proposition, we assume that the surface singularity is not simple.

\begin{figure}[hbt]
 \begin{center}
    \includegraphics[width=12cm]{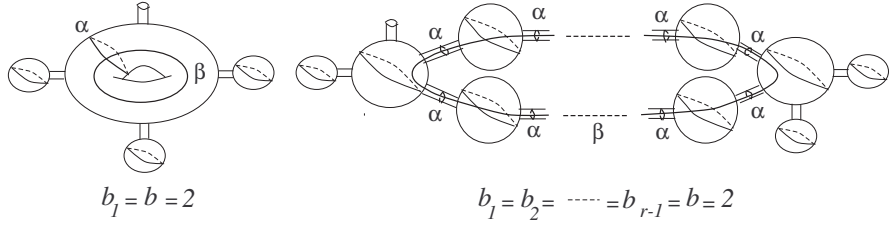}
  \caption{The page for the dihedral simple surface singularity for $r=2$ and $r>2$.}
  \label{dihedralsimple}
   \end{center}
 \end{figure}

\begin{proposition} \label{prop:dihedral}{\rm \textbf{(Dihedral quotient singularities)}}
If $b > 2$ then the unique Milnor fillable contact structure on the link of the dihedral quotient surface
singularity is supported by a planar open book decomposition with $N=b_{1}+b_{2}+ \cdots + b_{r-1}+b - 2r+1$
boundary components. The monodromy of the open book is
\[
\phi= T_1 T_2 \cdots T_{r-1} T_r  \left( t_{\delta^{r+1}_{1}} \right)^{2}
\left( t_{\delta^{r+2}_{1}} \right)^{2} \left( t_{c_{1}}\cdots t_{c_{r-1}} \right).
\]

If $b =b_{r-1}=b_{r-2}= \cdots = b_{k+1} =2$ and $b_k>2$ for some $1\leq k\leq r-1$,
then the unique Milnor fillable contact structure on the link of the dihedral quotient surface
singularity is supported by an open book of genus one. The number of boundary components  is $N=b_{1}+b_{2}+ \cdots +b_{k}- 2k+1$. The monodromy is given by
\[\displaystyle
\phi =\left\{
\begin{array}{ll}\displaystyle
    \left( t_{\delta^{1}_{1}} t_{\delta^{1}_{2}} \cdots t_{\delta^{1}_{b_{1}-2}}\right) \left(t_{\alpha_{1}}t_{\alpha_{2}} t_{\beta}  \right)^{2}, \mbox{ if }r=2 \ (\mbox{hence } k=1) \\
    T_1 T_2 \cdots T_{k-1} W_{k} ( t_{c_{1}}\cdots t_{c_{k-1}} ) (t_{\alpha_{1}}t_{\alpha_{2}} t_{\beta} )^{2} \left( t_{\alpha_{2}}\right) ^{r-(k+1)}, \mbox{ if }r>2.
\end{array}
\right.
\]
Here,
\begin{itemize}
  \item[] $T_1=t_{\delta^{1}_{1}}\cdots t_{\delta^{1}_{b_{1}-1}}$,
  \item[] $T_i=  t_{\delta^{i}_{1}}\cdots t_{\delta^{i}_{b_{i}-2}}$, $i=2,\ldots,r-1$.
  \item[] $T_r=t_{\delta^{r}_{1}}\cdots t_{\delta^{r}_{b-3}}$, \ and
  \item[] $W_k=t_{\delta^{k}_{1}}\cdots t_{\delta^{k}_{b_{k}-3}}$.
\end{itemize}

\end{proposition}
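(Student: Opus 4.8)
The plan is to follow the construction recipe of Section~\ref{costruction}, exactly as in the proof of Proposition~\ref{prop:cyclic}, applied to the dihedral resolution graph of Figure~\ref{dihedral}. Label the vertices so that $A_1,\ldots,A_r$ form the horizontal chain (with weights $-b_1,\ldots,-b_{r-1},-b$, the branch vertex $A_r$ having valency three) and $A_{r+1},A_{r+2}$ are the two $(-2)$-legs attached to $A_r$. First I would write down the intersection matrix $I(\Gamma)$ and solve \eqref{first} for the fundamental cycle $\underline m$, choosing the smallest positive solution with all $n_i\geq 0$. The two regimes $b>2$ and $b=2$ produce genuinely different cycles, and this is the source of the dichotomy in the statement.

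For $b>2$ I expect the fundamental cycle to be $\underline m=(1,\ldots,1)$: one checks directly that $n_r=b-3\geq 0$ and $n_{r+1}=n_{r+2}=1$, so all $n_i\geq 0$. Then every $\gcd(m_i,m_j)=1$, so by \eqref{third}--\eqref{fourth} each $F_i$ is a sphere and the page is planar. Reading off the monodromy as in Proposition~\ref{prop:cyclic} gives the binding twists $T_1,\ldots,T_r$ and the plumbing cores $t_{c_1},\ldots,t_{c_{r-1}}$ along the chain. The one new point is the square $(t_{\delta^{r+1}_1})^2$: each leg $F_{r+1}$ is an annulus carrying one binding annulus $U^{r+1}_1$ and one plumbing annulus $U^{r,r+1}_1$, whose cores are isotopic inside the concatenated annular region of the page. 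Hence the binding twist $t_{\delta^{r+1}_1}$ and the leg plumbing twist $t_c$ coincide, and together they contribute $(t_{\delta^{r+1}_1})^2$ (this is why only the chain cores $c_1,\ldots,c_{r-1}$ survive). Collecting these factors yields the stated monodromy and the count $N$.

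For $b=2$ the cycle jumps. The tail $A_{k+1},\ldots,A_r$ is a string of $(-2)$-vertices ending in the fork $A_{r+1},A_{r+2}$, i.e. a $D$-type subconfiguration, so I expect $m_i=2$ along this spine and $m_i=1$ on $A_1,\ldots,A_k$ and on the two legs. Now the legs become disks that cap off with a trivial twist, while the doubled plumbing annuli ($\gcd=2$) along the spine glue the (in part disconnected) surfaces $F_i$ into a single page of genus one. I would confirm $g(\Sigma)=1$ by an Euler-characteristic count, using $\chi(\Sigma)=2-2g-N$ and summing $\chi$ over the $F_i$ and all the annuli, and I would read off $N=b_1+\cdots+b_k-2k+1$ directly from $\sum_i n_i$.

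The hard part will be the monodromy in the genus-one case. Over the genus-one piece the construction only determines an $m_i=2$ root of the product of the relevant boundary twists, and the task is to identify this root explicitly. The idea is to apply the two-holed torus relation \eqref{2holed1} (passing to the braid-equivalent forms \eqref{2holed2}--\eqref{2holed3} to match the curves coming from the spine), which exhibits $(t_{\alpha_1}t_{\alpha_2}t_\beta)^2$ as the square root of $t_{\delta_1}t_{\delta_2}$; the remaining $(-2)$-vertices of the spine then contribute the extra factor $(t_{\alpha_2})^{r-(k+1)}$, exactly as the $(t_\alpha)^{r-2}$ term appears in the simple $D_{r+2}$ monodromy recalled before the statement. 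Theorem~\ref{mcg} of Bonatti--Paris is what makes this rigorous: since a torus with boundary has at most one $m$-th root up to conjugation, the root produced by the construction must coincide with the explicit expression obtained from the torus relations. Assembling the chain factors $T_1\cdots T_{k-1}$, the factor $W_k$, the plumbing twists $t_{c_1}\cdots t_{c_{k-1}}$, and this torus factor then gives the two displayed formulas for $r=2$ and $r>2$.
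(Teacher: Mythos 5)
Your proposal is correct and follows essentially the same route as the paper: the same choices of $\underline m$ (all $1$'s for $b>2$; $2$'s along the $(-2)$-spine from $A_{k+1}$ to $A_r$ and $1$'s elsewhere for $b=2$), the same identification of the leg cores $c_r,c_{r+1}$ with $\delta^{r+1}_1,\delta^{r+2}_1$ to produce the squared twists, and the same use of the two-holed torus relation combined with the Bonatti--Paris uniqueness of roots (Theorem~\ref{mcg}) to pin down the square root $(t_{\alpha_1}t_{\alpha_2}t_\beta)^2$ and the residual $(t_{\alpha_2})^{r-(k+1)}$ factor. The only cosmetic difference is that the paper splits the $b=2$ analysis into the subcases $r=2$, $k=r-1$, and $k<r-1$, which your uniform description of the multiplicity vector correctly specializes to.
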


\begin{proof}
 We construct the open book by following the steps explained in Section~\ref{costruction}.

Suppose first that $b > 2$. Let $\underline{m} = (1, \ldots , 1)$.
From the equation~\eqref{first}, we
get $\underline{n}= (b_{1}-1,b_{2}-2, \ldots , b_{r-1}-2, b - 3,1,1)$.
For each vertex $A_i$ of the graph with valency $v_i$, we take a sphere $S_i$
with $v_i+n_i$ boundary components. Since $m_i=1$, $S_i=F_i$ in the notation of Section~\ref{costruction}. For each edge $E$ of the graph connecting the vertices $A_i$ and $A_j$, we glue an annulus connecting the spheres $S_i$ and $S_j$. We then glue $n_i$ annuli to the sphere $S_i$. The resulting surface is a page of the open book and is a sphere with
$N=n_1+\cdots +n_{r+2}=b_{1}+b_{2}+ \cdots + b_{r-1}+b - 2r+1$ boundary components (cf. Figure~\ref{pagedihedalcyclic}). For the monodromy, we have the diffeomorphisms below, gluing them we find the monodromy of the open book.

In Section~\ref{costruction}, it is explained that $(\phi|_{U^{i}_{t}})^{m_{i}}=t_{\delta^{i}_{t}}$, for $i=1,\ldots, N$, where $\delta^{i}_{t}$ are the core circles of $U^{i}_{t}$. We find that the monodromy restricted to each annulus $U^{i,j}_{l}$ is given by  $(\phi|_{U^{i,j}_{l}})^{m_{i}m_{j}/\gcd{(m_{i},m_{j})}}= t_{c_{j-1}}$, where $c_{j-1}$ is the core of the annulus  $U^{i,j}_{l}$. Then we have
\begin{enumerate}
  \item[$\bullet$] $\phi|_{U^{1}_{j}}= t_{\delta^{1}_{j}}$ for $j=1,\ldots,b_{1}-1$,
  \item[$\bullet$] $\phi|_{U^{i}_{j}}= t_{\delta^{i}_{j}}$ for $i=2, \ldots , r-1$ and $j=1, \ldots ,b_{i}-2$,
  \item[$\bullet$] $\phi|_{U^{r}_{j}}= t_{\delta^{r}_{j}}$ for $j=1, \ldots , b-3$,
  \item[$\bullet$] $\phi|_{U^{r+1}_{1}}= t_{\delta^{r+1}_{1}}$,
  \item[$\bullet$] $\phi|_{U^{r+2}_{1}}= t_{\delta^{r+2}_{1}}$, and
  \item[$\bullet$] $\phi|_{U^{i,i+1}_{1}}= t_{c_{i}}$ for $i=1,\ldots,r+1$.
\end{enumerate}

The curve $c_r$ is isotopic to the curve $\delta^{r+1}_{1}$ and the curve $c_{r+1}$ is isotiopic to the curve $\delta^{r+2}_{1}$, and gluing the maps above we can easily get the monodromy.

\[
\phi=(t_{\delta^{1}_{1}}\cdots t_{\delta^{1}_{b_{1}-1}}) \cdots ( t_{\delta^{i}_{1}}\cdots t_{\delta^{i}_{b_{i}-2}} ) \cdots  ( t_{\delta^{r}_{1}}\cdots t_{\delta^{r}_{b-3}} )\left( t_{\delta^{r+1}_{1}} \right)^{2}
\left( t_{\delta^{r+2}_{1}} \right)^{2} \left( t_{c_{1}}\cdots t_{c_{r-1}} \right),
\]
where $i=2,\ldots,r-1$.

\[
\phi= T_1 T_2 \cdots T_{r-1} T_r  \left( t_{\delta^{r+1}_{1}} \right)^{2}
\left( t_{\delta^{r+2}_{1}} \right)^{2} \left( t_{c_{1}}\cdots t_{c_{r-1}} \right).
\]

  \begin{figure}[hbt]
 \begin{center}
    \includegraphics[width=9cm]{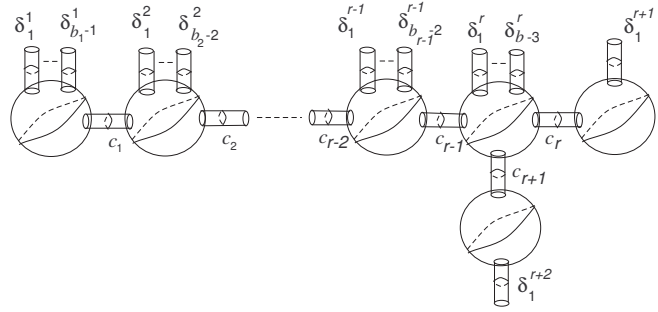}
  \caption{The page for dihedral singularities for $b>2$.}
  \label{pagedihedalcyclic}
   \end{center}
 \end{figure}

Suppose now that $b=2$. We consider two cases:

{\bf Case 1: $r = 2$.}
In this case by taking $ \underline{m} =(1,2,1,1)$, we find from equation~(\ref{first}) that
$\underline{n}= (b_1-2,1,0,0)$. It follows from the construction that
the page of the open book is a torus with $b_1-1$ boundary components (cf. Figure \ref{dihedral0}). For the monodromy, there are the diffeomorphisms of the annuli $U^{2}_{1}$, $U^{1}_{i}$ and $U^{1,2}_{1}$ given by
\begin{enumerate}
  \item[] $\left(  \phi|_{U^{2}_{1}} \right)^2 = t_{\delta^{2}_{1}}$ ,
  \item[] $\left(  \phi|_{U^{1,2}_{1}} \right)^2 = t_{c_{1}}$, and
  \item[] $\left(  \phi|_{U^{1}_{i}} \right)= t_{\delta^{1}_{i}}$ , for $i=1,2, \ldots, b_1-2$.
\end{enumerate}
By using the two-holed torus relation (\ref{2holed1}) for the torus bounded by $c_{1}$ and $\delta ^{2}_{1}$, and by Theorem~\ref{mcg}, the monodromy is found to be
\begin{eqnarray*}
  \phi
   &=& \left( (\phi|_{U^{1}_{1}} ) (\phi|_{U^{1}_{2}})  \cdots  (\phi|_{U^{1}_{b_{1}-2}}) \right) \left(  (\phi|_{U^{2}_{1}}) ( \phi|_{U^{1,2}_{1}}) \right) \\
   &=& \left( t_{\delta^{1}_{1}} t_{\delta^{1}_{2}} \cdots t_{\delta^{1}_{b_{1}-2}}\right) \left(t_{\alpha_{1}}t_{\alpha_{2}} t_{\beta}  \right)^{2}.
\end{eqnarray*}

\begin{figure}[hbt]
 \begin{center}
    \includegraphics[width=6cm]{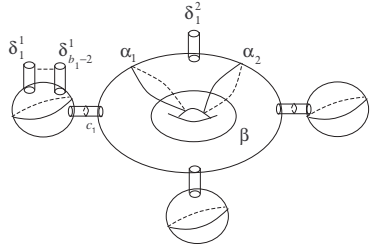}
  \caption{The page $\Sigma$ for dihedral singularity for $r=2$, $b = 2$ and $b_1 > 2$.}
  \label{dihedral0}
   \end{center}
 \end{figure}

 {\bf Case 2: $r > 2$.} We divide this case into two subcases: $k=r-1$ and $k<r-1$.

Suppose first that $k=r-1$, so that $b_{r-1} > 2$.  In this case we take
\[\underline{m} =(1,1,\ldots,1,1,2,1,1)\]
so that all $m_i=1$ but $m_r=2$. We easily find from equation~(\ref{first}) that
\[
\underline{n}= (b_{1}-1,b_{2}-2,b_3-2, \ldots , b_{r-2}-2, b_{r-1}-3,1,0,0).
\]
It follows that the page of the open book is a torus; the number of boundary components is $N=b_{1}+b_{2}+ \cdots +b_{r-1}- 2r +3$ (cf. Figure \ref{dihedral1}). The monodromy is
\[
T_1 T_2 \cdots T_{r-2} W_{r-1} ( t_{c_{1}}\cdots t_{c_{r-2}} ) (t_{\alpha_{1}}t_{\alpha_{2}} t_{\beta} )^{2}.
\]
\begin{figure}[hbt]
 \begin{center}
    \includegraphics[width=12cm]{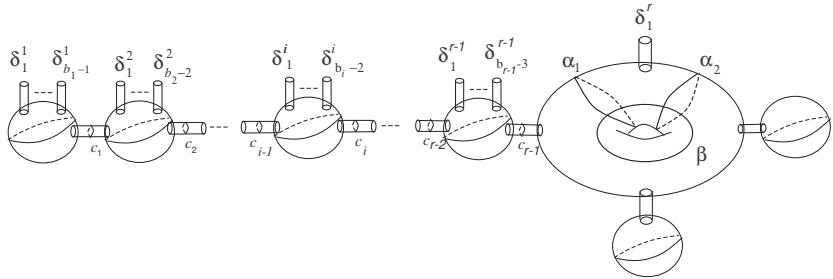}
  \caption{The page for dihedral singularity for $r>2$, $b = 2$ and $b_{r-1} > 2$.}
  \label{dihedral1}
   \end{center}
 \end{figure}

Suppose now that $k<r-1$. Taking $m_i=1$ for $1\leq i\leq k$, $m_j=2$ for $k+1 \leq j\leq r$, $m_{r+1}=m_{r+2}=1$,
so that
\[
\underline{m} =(1,1,\ldots,1,2,\ldots,2,2,1,1),
\]
we find
\[
\underline{n}= (b_{1}-1,b_{2}-2,b_{3}-2, \ldots , b_{k-1}-2, b_{k}-3,1,0,\ldots, 0).
\]
It follows again from the construction in Section~\ref{costruction} that the page of the open book
is a torus with $N=b_{1}+b_{2}+ \cdots +b_{k}- 2k+1$ boundary components (cf. Figure \ref{dihedral2}).
For the monodromy, we glue the following maps:
\begin{enumerate}
  \item[] $\phi|_{U^{1}_{j}}= t_{\delta^{1}_{j}}$, $j=1,\ldots,b_{1}-1$,
  \item[] $\phi|_{U^{i}_{j}}= t_{\delta^{i}_{j}}$, $i=2,\ldots,k-1$ and $j=1, \ldots ,b_{i}-2$,
  \item[] $\phi|_{U^{k}_{j}}= t_{\delta^{k}_{j}}$ for $j=1, \ldots , b_{k}-3$,
  \item[] $\phi|_{U^{i,i+1}_{1}}= t_{c_{i}}$, $i=1,\ldots,k-1$,
  \item[] $\left(\phi|_{U^{k,k+1}_{1}}\right)^2 = t_{c_{k}}$,
  \item[] $\left(\phi|_{U^{k+1}_{1}}\right)^2 = t_{\delta^{k+1}_{1}}$,
  \item[] $\left(\phi|_{U^{i,i+1}_{j}}\right)^2= t_{c_{i}}$, $i=k+1,\ldots,r-1$ and $j=1,2$.
\end{enumerate}
In this case, for $i=k+1,\ldots,r-1$ the curves $c_{i}$ are isotopic to $\alpha_{2}$. By the two--holed torus relation (\ref{2holed1}) for the torus with boundary $c_{k}$ and $\delta ^{k+1}_{1}$ and by Theorem~\ref{mcg}, we get the monodromy
to be
\[
T_1 T_2 \cdots  T_{k-1} W_{k} ( t_{c_{1}}\cdots t_{c_{k-1}} ) (t_{\alpha_{1}}t_{\alpha_{2}} t_{\beta} )^{2} \left( t_{\alpha_{2}}\right) ^{r-(k+1)}.
\]
\begin{figure}[hbt]
 \begin{center}
    \includegraphics[width=12cm]{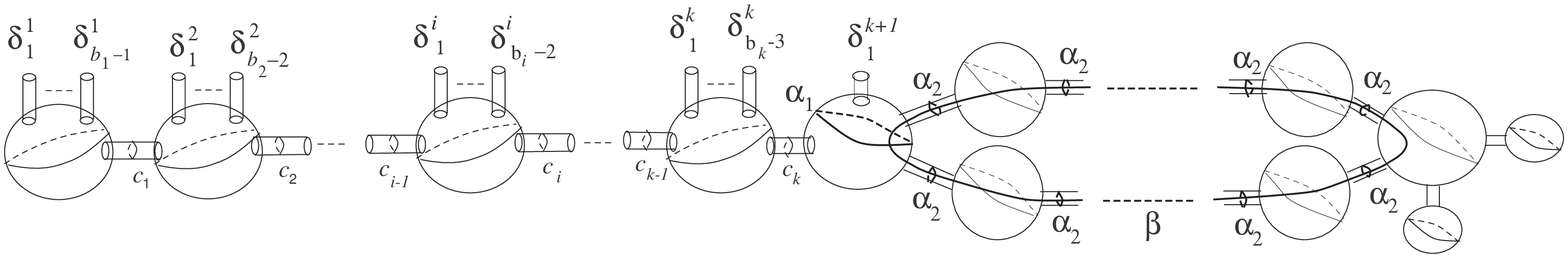}
  \caption{The page for dihedral singularity for $r>2$, $b = b_{r-1}=\cdots= b_{k+1}=2$, $b_{k} > 2$ and $1\leq  k <r-1$.}
  \label{dihedral2}
   \end{center}
 \end{figure}
\end{proof}

\subsection{Tetrahedral quotient singularities}
In this subsection, we investigate the tetrahedral quotient singularities. We write explicitly the Milnor open book decomposition supporting the unique Milnor fillable contact structure on the link of a tetrahedral quotient singularity.
\begin{proposition} \label{prop:tetrahedral} {\rm \textbf{(Tetrahedral quotient singularities)}}
The unique Milnor fillable contact structure on the link of a tetrahedral quotient
singularity is supported by a planar open book decomposition (resp.
a genus-$1$ open book decomposition) if $b > 2$ (resp. $b=2$). The number $N$ of
boundary components of the page and the monodromy $\phi$ are given as follows (cf.
Figure~\ref{tetrahedral_pl} and Figure~\ref{tetrahedral_gen}):
For $b > 2$
\[\displaystyle
(N,\phi) =\left\{
\begin{array}{ll}\displaystyle
    \left( b,\ \, (t_{\delta^{1}_{1}})^{3} ( t_{\delta^{3}_{1}}\cdots t_{\delta^{3}_{b-3}} ) (t_{\delta^{5}_{1}})^{3} (t_{\delta^{6}_{1}})^{2}\right) ,  & \mbox{ in the case } (i) \\
     \left( b+1,\ \, t_{\delta^{1}_{1}} t_{\delta^{1}_{2}} ( t_{\delta^{2}_{1}}\cdots t_{\delta^{2}_{b-3}} ) (t_{\delta^{4}_{1}})^{3} (t_{\delta^{5}_{1}})^{2} t_{c_{1}}\right), &  \mbox{ in the case } (ii) \\
    \left( b+2, \ \, t_{\delta^{1}_{1}} t_{\delta^{1}_{2}} ( t_{\delta^{2}_{1}}\cdots t_{\delta^{2}_{b-3}} ) t_{\delta^{3}_{1}} t_{\delta^{3}_{2}} (t_{\delta^{4}_{1}})^{2} t_{c_{1}}  t_{c_{2}}\right), &
     \mbox{ in the case } (iii)
\end{array}
\right.
\]
and for $b = 2$
\[\displaystyle
(N,\phi) =\left\{
\begin{array}{ll}\displaystyle
    \left( 1,\ \, (t_{\alpha} t_{\beta} )^{4} \right) , & \mbox{ in the case } (i) \\
     \left( 2,\ \, t_{\delta^1_{1}} (t_{\alpha_{1}}t_{\alpha_{2}} t_{\beta} t_{\alpha_{1}}t_{\alpha_{2}} t_{\beta} t_{\alpha_{2}}) \right), &
     \mbox{ in the case } (ii) \\
    \left( 3,\ \, t_{\delta^1_{1}} t_{\delta^{3}_1} (t_{\alpha_{1}}t_{\alpha_{3}} t_{\beta} t_{\alpha_{2}}t_{\alpha_{3}} t_{\beta} ) \right), &
     \mbox{ in the case } (iii).
\end{array}
\right.
\]
\end{proposition}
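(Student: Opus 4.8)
The plan is to apply, for each of the three graph types $(i)$, $(ii)$, $(iii)$ and in each of the regimes $b>2$ and $b=2$, the construction recipe of Section~\ref{costruction}, following the model of the cyclic and dihedral computations (Propositions~\ref{prop:cyclic} and~\ref{prop:dihedral}). First I would write down the intersection matrix $I(\Gamma)$ for each tetrahedral resolution graph of Figure~\ref{tetrahedral}, using the left-to-right-then-bottom vertex ordering fixed in Section~\ref{costruction}, and solve~\eqref{first} for the minimal positive vector $\underline m$ making $\underline n=-I(\Gamma)\underline m^{t}\ge 0$; since $\underline m$ is then the fundamental cycle of the resolution, it is found by the usual ``smallest $\underline m$ with $I(\Gamma)\underline m^{t}\le 0$'' computation. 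The entries of $\underline m$ are $>1$ at several vertices, and reading $\underline n$ off immediately gives the binding annuli $U^{i}_{t}$ and the connecting annuli $U^{i,j}_{l}$.

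For the regime $b>2$ I would compute the genus of every piece $F_i$ from~\eqref{third} (when $n_i>0$) or from~\eqref{fourth} together with its component count $d_i$ (when $n_i=0$), and its number of boundary components $n_i+\sum_{(A_i,A_j)\in\mathcal E}\gcd(m_i,m_j)$. I expect each $F_i$ to have genus $0$, so that plumbing them along the $U^{i,j}_{l}$ produces a planar page $\Sigma$, and tallying the remaining binding annuli yields $N=b,\,b+1,\,b+2$ in cases $(i),(ii),(iii)$. The monodromy is then assembled from the local rules $(\phi|_{U^{i}_{t}})^{m_{i}}=t_{\delta^{i}_{t}}$ and $(\phi|_{U^{i,j}_{l}})^{m_{i}m_{j}/\gcd(m_i,m_j)}=t_{c_{j-1}}$; because several $m_i>1$, some of these maps are fractional (root) twists and several cores $c_{j-1}$ become isotopic to boundary-parallel curves, so that the products collapse to the stated boundary-parallel twists. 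The exponents $3,3,2$ in case $(i)$ (and similarly in $(ii),(iii)$) are exactly the covering multiplicities recorded by this collapse, as displayed in Figure~\ref{tetrahedral_pl}.

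When $b=2$ the branch (valency-$3$) vertex acquires multiplicity bigger than one, so~\eqref{fourth} makes the piece $F$ lying over it a one-holed, two-holed, or three-holed \emph{torus} in cases $(i),(ii),(iii)$ respectively; the remaining pieces are spheres and annuli that cap $F$ down to a genus-one page with $N=1,2,3$ boundary components. The annuli abutting $F$ now carry $m_i$-th roots of Dehn twists, so to identify the glued monodromy I would match it against the appropriate torus relation of Figure~\ref{relations}: the one-holed relation~\eqref{1holed} in case $(i)$, a two-holed relation~\eqref{2holed1}--\eqref{2holed3} in case $(ii)$, and the three-holed relation~\eqref{3holed2} in case $(iii)$. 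Each such relation writes a product of boundary twists as a power $(t_\alpha t_\beta\cdots)^{k}$, exhibiting the required fractional twist as a root of a product of boundary-parallel twists, whereupon Theorem~\ref{mcg} forces that root to be unique up to conjugacy and hence equal to the stated monodromy. In case $(iii)$ this is cleanest: the parenthesized factor $t_{\alpha_1}t_{\alpha_3}t_\beta t_{\alpha_2}t_{\alpha_3}t_\beta$ is literally one of the two equal factors on the right of~\eqref{3holed2}.

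The main obstacle is the $b=2$ identification. In the planar regime the monodromy is an honest product of boundary-parallel twists, but for $b=2$ one must match the abstractly constructed covering torus, together with its fractional boundary monodromy, to the concrete curves $\alpha,\beta,\alpha_i$ of Figure~\ref{relations}, and then argue that the root produced by the construction agrees with the root furnished by the torus relation. Theorem~\ref{mcg} is what makes this rigorous, reducing the matching to the statement that two mapping classes which are $m$-th roots of the \emph{same} product of boundary-parallel twists must coincide; the care required is to keep the boundary components and their framings bookkept correctly as the auxiliary spheres are glued on. A secondary, purely combinatorial, point is to verify in the $b>2$ cases that the root twists and the cores $c_{j-1}$ collapse to precisely the boundary-parallel twists with the multiplicities claimed, and that the plumbing creates no hidden genus.
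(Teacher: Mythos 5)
Your overall strategy is the paper's: solve~\eqref{first} for the minimal $\underline m$, assemble the page from the $F_i$ and the annuli, and in the $b=2$ cases identify the glued monodromy with a root of a product of boundary twists via the torus relations and the Bonatti--Paris uniqueness theorem (Theorem~\ref{mcg}). The choice of relation in each $b=2$ case also matches what the paper (implicitly) uses. However, your description of the $b>2$ regime contains a genuine error. For all three tetrahedral graphs with $b>2$ the minimal solution is $\underline m=(1,\dots,1)$ (e.g.\ $\underline n=(1,0,b-3,0,1,1)$ in case $(i)$), so \emph{no} entry of $\underline m$ exceeds $1$, every $F_i$ is a sphere, every local monodromy is an honest Dehn twist, and no fractional twists appear. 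The exponents $3,3,2$ in case $(i)$ are not ``covering multiplicities'': they arise because several plumbing cores $c_i$ are boundary-parallel and isotopic to the same curve (e.g.\ $c_1$ and $c_2$ are both isotopic to $\delta^1_1$, which together with the twist coming from $U^1_1$ gives $(t_{\delta^1_1})^3$). Your stated mechanism would not produce the claimed answer; the correct bookkeeping is purely about which cores become boundary-parallel after the spheres with few boundary components are absorbed.

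A secondary inaccuracy concerns where the genus comes from when $b=2$. In case $(i)$ ($\underline m=(1,2,3,2,1,2)$) the central piece $F_3$ is indeed a three-holed torus, and in case $(iii)$ ($\underline m=(1,2,1,1)$) the central piece is a four-holed torus; but in case $(ii)$ ($\underline m=(1,2,2,1,1)$) equation~\eqref{fourth} makes every $F_i$ a sphere, and the genus of the page is created instead by the \emph{two} plumbing annuli $U^{2,3}_1,U^{2,3}_2$ joining the adjacent multiplicity-$2$ vertices (since $\gcd(m_2,m_3)=2$). Your plan never mentions multiple connecting annuli between a single pair of vertices, and without them you would get a planar page with the wrong number of boundary components in case $(ii)$. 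Once these two points are corrected, the remainder of your argument --- in particular the use of Theorem~\ref{mcg} to pin down the root after matching against the one-, two-, and three-holed torus relations --- is exactly the paper's proof.
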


\begin{proof}
\textbf{Case $1$:} $b > 2$. We analyze each case.

$(i)$ Following the steps in the construction given in Section~\ref{costruction}, we choose $\underline{m} = (1,1,1,1,1,1)$. From the equation (\ref{first}) we find $\underline{n}= (1,0,b-3,0,1,1)$. Since $m_i=1$ for all $i$, the surface $F_i$ at the vertex $A_i$
is a sphere with $v_{i}+ n_{i}$ boundary components.
We connect these spheres $F_i$ and $F_j$ with an annulus if the vertices $A_i$ and $A_j$
are connected by an edge. It follows that the page of the open book is a sphere with
$n_1+\cdots + n_6= b$ boundary components (cf. Figure~\ref{tetrahedral_pl}$(i)$).
The monodromy $\phi$ restricted to each annuli is given as
\begin{enumerate}
  \item[$\bullet$] $\phi|_{U^{1}_{1}}= t_{\delta^{1}_{1}}$,
  \item[$\bullet$] $\phi|_{U^{3}_{j}}= t_{\delta^{3}_{j}}$, for $j=1, \ldots, b-3$,
  \item[$\bullet$] $\phi|_{U^{5}_{1}}= t_{\delta^{5}_{1}}$,
  \item[$\bullet$] $\phi|_{U^{6}_{1}}= t_{\delta^{6}_{1}}$,
  \item[$\bullet$] $\phi|_{U^{i,i+1}_{1}}= t_{c_{i}}$, for $i=1, \ldots, 4$, and
  \item[$\bullet$] $\phi|_{U^{3,6}_{1}}= t_{c_{5}}$.
\end{enumerate}

Note that the curves $ c_1$ and $c_2$ are isotopic to $\delta^{1}_{1}$, $ c_3$ and $c_4$ are isotopic to $\delta^{5}_{1}$, and $ c_5$ is isotopic to $\delta^{6}_{1}$. From this we find that
\[
\phi= (t_{\delta^{1}_{1}})^{3} ( t_{\delta^{3}_{1}}\cdots t_{\delta^{3}_{b-3}} ) (t_{\delta^{5}_{1}})^{3} (t_{\delta^{6}_{1}})^{2}.
\]

$(ii)$ Choosing $\underline{m} = (1,1,1,1,1)$ gives $\underline{n}= (2,b-3,0,1,1)$. Following the construction above we easily get the desired open book (cf. Figure~\ref{tetrahedral_pl}$(ii)$).

$(iii)$ Taking $\underline{m} = (1,1,1,1)$ gives $\underline{n}= (2,b-3,2,1)$. The rest of the proof is the same as the case $(i)$ (cf. Figure~\ref{tetrahedral_pl}$(iii)$).

\textbf{Case $2$:} $b = 2$. Again we investigate each of the three cases.

This type of singularity has a graph $\Gamma$ given in Figure~\ref{tetrahedral}. For each vertex $A_{i}$ of $\Gamma$, there is a sphere with $v_{i} + n_{i}$ boundary components, where $v_i$ is the valency of the vertex $A_i$ and $n_i$ is calculated from equation (\ref{first}).

$(i)$ Choosing $\underline{m} = (1,2,3,2,1,2)$ gives $\underline{n}= (0,0,0,0,0,1)$. To construct the page, we plumb the surfaces $F_i$, according to the graph $\Gamma$. The surface $F_i$ is the $m_i$--cover of the sphere with $v_i+n_i$ boundary components. The number of boundary components of $F_{i}$ is
\[
n_{i}+ \sum_{(A_{i},A_{j})\in \mathcal{E}} \gcd{(m_{i},m_{j})} = n_i + v_i.
\]
Hence the surface $F_1$ is a sphere with one boundary component,
$F_2$ is a sphere with two boundary components,
$F_3$ is a torus with three boundary components,
$F_4$ is a sphere with two boundary components,
$F_5$ is sphere with one boundary component, and
$F_6$ is sphere with two boundary components.

The page of the open book is constructed by connecting these surfaces with an annulus according to the graph $\Gamma$ (cf. Figure~\ref{tetrahedral_gen}$(i)$). Thus the page is a one--holed torus. The monodromy can easily be calculated by gluing the monodromy restricted to the annuli $U^{3,6}_{1}$ and $U^{6}_{1}$.
\[
\phi= (\phi|_{U^{3,6}_{1}} ) (\phi|_{U^{6}_{1}}).
\]
From Section~\ref{costruction}, we have
\begin{enumerate}
  \item[$\bullet$] $\left(  \phi|_{U^{3,6}_{1}} \right)^6 = t_{c_{5}}$, and
  \item[$\bullet$] $\left(  \phi|_{U^{6}_{1}} \right)^2= t_{\delta^{6}_{1}}$.
\end{enumerate}
Since the curves $c_5$ and $\delta^{6}_{1}$ are isotopic, we may write
\begin{enumerate}
  \item[] $ \phi ^3 = \left( (\phi|_{U^{3,6}_{1}} ) (\phi|_{U^{6}_{1}})  \right)^3 $ \\
  \item[] $ \ \ \ \ = \left( t_{\delta^{6}_{1}} \right)^{2} $  \\
  \item[] $ \ \ \ \ = \left( t_{\alpha} t_{\beta} \right)^{12}, $
\end{enumerate}
by using the once-punctured torus relation (\ref{1holed}).
By Theorem~\ref{mcg}, we find
\[
\phi= \left( t_{\alpha} t_{\beta} \right)^4.
\]

$(ii)$ We take $\underline{m} = (1,2,2,1,1)$ and get $\underline{n}= (1,0,1,0,0)$.
It follows that the page of the open book is a torus with two boundary components
(cf. Figure~\ref{tetrahedral_gen}$(ii)$). The monodromy can easily be calculated.

$(iii)$ Choosing $\underline{m} = (1,2,1,1)$ gives $\underline{n}= (1,1,1,0)$. Then
the page of the open book is a torus with three boundary components (cf. Figure~\ref{tetrahedral_gen}$(iii)$). The monodromy is found to be as stated.
\end{proof}

\begin{figure}[hbt]
 \begin{center}
    \includegraphics[width=9cm]{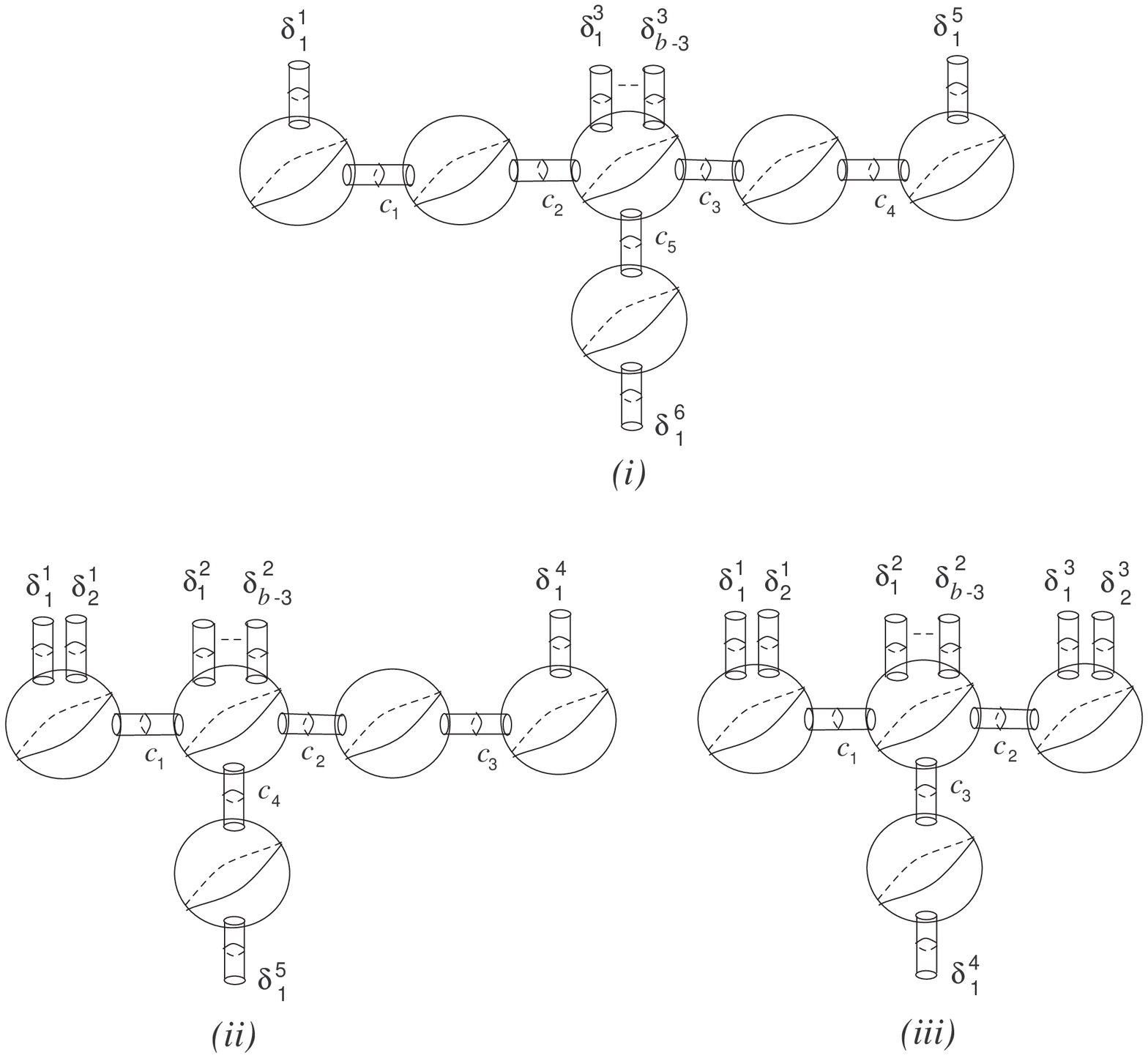}
  \caption{The page for tetrahedral singularities for $b>2$.}
  \label{tetrahedral_pl}
   \end{center}
 \end{figure}

\begin{figure}[hbt]
 \begin{center}
    \includegraphics[width=9cm]{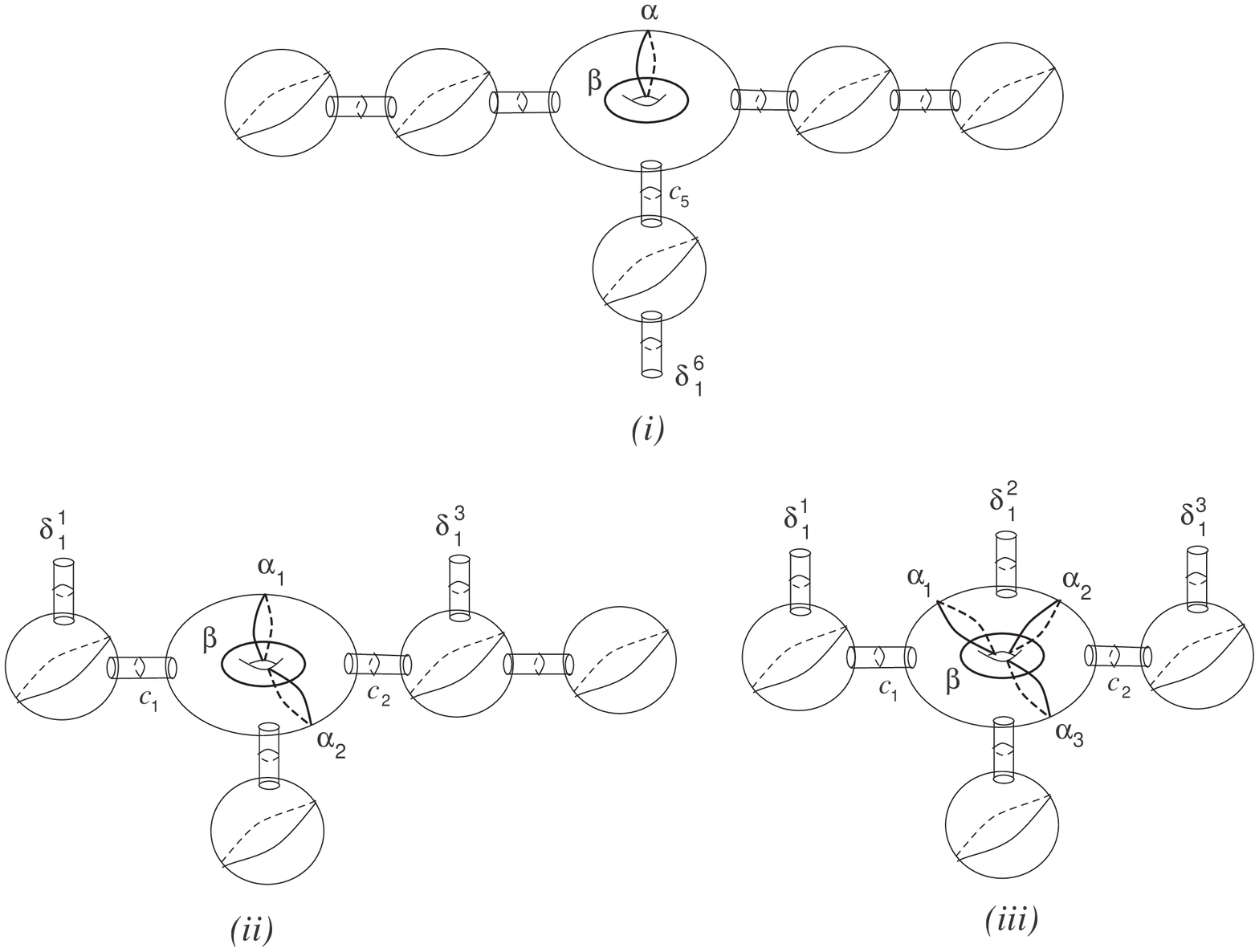}
  \caption{The page for tetrahedral singularities for $b=2$.}
  \label{tetrahedral_gen}
   \end{center}
 \end{figure}

\subsection{Octahedral quotient singularities}
In the proposition below, we construct the Milnor open book decomposition supporting the unique Milnor fillable contact structure
on the link of an octahedral quotient singularity.
\begin{proposition} \label{prop:octahedral} {\rm \textbf{(Octahedral quotient singularities)}}
The unique Milnor fillable contact structure on the link of an octahedral quotient
singularity is supported by a planar open book decomposition if $b > 2$ and
a genus-$1$ open book decomposition if $b=2$. The number $N$ of
boundary components of the page and the monodromy $\phi$ are given as follows (cf.
Figure~\ref{octahedral}):
For $b > 2$
\[\displaystyle
(N,\phi) =\left\{
\begin{array}{ll}\displaystyle
    \left( b,\ \, (t_{\delta^{1}_{1}})^{4} ( t_{\delta^{4}_{1}}\cdots t_{\delta^{4}_{b-3}} ) (t_{\delta^{6}_{1}})^{3} (t_{\delta^{7}_{1}})^{2} (t_{\delta^{6}_{1}})^{2}\right) ,  & \mbox{ in the case } (i) \\
     \left( b+1,\ \, (t_{\delta^{1}_{1}})^4 ( t_{\delta^{4}_{1}}\cdots t_{\delta^{4}_{b-3}} ) t_{\delta^{5}_{1}} t_{\delta^{5}_{2}} (t_{\delta^{6}_{1}})^2 t_{c_{1}}\right), &  \mbox{ in the case } (ii) \\
    \left( b+2, \ \, t_{\delta^{1}_{1}} t_{\delta^{1}_{2}} ( t_{\delta^{2}_{1}}\cdots t_{\delta^{2}_{b-3}} ) t_{\delta^{3}_{1}} t_{\delta^{3}_{2}} (t_{\delta^{4}_{1}})^{2} t_{c_{1}}  t_{c_{2}}\right), &
     \mbox{ in the case } (iii)\\
      \left( b+3, \ \, t_{\delta^{1}_{1}} t_{\delta^{1}_{2}} t_{\delta^{1}_{3}} ( t_{\delta^{2}_{1}}\cdots t_{\delta^{2}_{b-3}} ) (t_{\delta^{4}_{1}})^{3} (t_{\delta^{5}_{1}})^{2} t_{c_{1}} \right), &
     \mbox{ in the case } (iv)
\end{array}
\right.
\]
and for $b = 2$
\[\displaystyle
(N,\phi) =\left\{
\begin{array}{ll}\displaystyle
    \left( 1,\ \, t_{\beta} (t_{\alpha} t_{\beta} )^{4} \right) , & \mbox{ in the case } (i) \\
     \left( 2,\ \, t_{\delta_{2}} (t_{\alpha_{1}}t_{\alpha_{2}} ( t_{\alpha_{1}}t_{\alpha_{2}} t_{\beta})^2 \right), &
     \mbox{ in the case } (ii) \\
    \left( 3,\ \, t_{\delta_{1}} t_{\delta_{2}} (t_{\alpha_{1}}t_{\alpha_{2}} t_{\beta} t_{\alpha_{1}}t_{\alpha_{2}} t_{\beta} t_{\alpha_{2}} ) \right), &
     \mbox{ in the case } (iii)\\
      \left( 4,\ \, t_{\delta_{1}} t_{\delta_2} t_{\delta_{4}} (t_{\alpha_{1}}t_{\alpha_{3}} t_{\beta} t_{\alpha_{2}}t_{\alpha_{3}} t_{\beta} ) \right), &
     \mbox{ in the case } (iv).
\end{array}
\right.
\]
\end{proposition}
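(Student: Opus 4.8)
The plan is to follow the construction recipe of Section~\ref{costruction} verbatim, treating the eight branches (the four subcases for $b>2$ and the four for $b=2$) in parallel, exactly as was done for the tetrahedral singularities. The only genuinely new input is the choice of a multiplicity vector $\underline{m}$ adapted to each branch of the octahedral resolution graph of Figure~\ref{octahedral}, together with the root extraction needed in the $b=2$ cases. First I would record the intersection matrix $I(\Gamma)$ for each of the four octahedral graphs by reading off the weights and adjacencies from Figure~\ref{octahedral}, using the left-to-right-then-bottom indexing convention fixed in Section~\ref{costruction}.

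For $b>2$ I would take $\underline{m}=(1,1,\dots,1)$ in every subcase, so that $\underline{m}$ is the fundamental cycle. Solving \eqref{first} then produces the vectors $\underline{n}$, and since all $m_i=1$ the genus formula \eqref{third} (equivalently \eqref{fourth}) forces $g(F_i)=0$; hence every $F_i$ is a sphere with $v_i+n_i$ boundary components and the plumbed page is automatically planar. The boundary count $N=\sum_i n_i$ should reproduce the stated values $b,\,b+1,\,b+2,\,b+3$. The monodromy is then assembled from the local pieces $\phi|_{U^i_t}=t_{\delta^i_t}$ and $\phi|_{U^{i,j}_l}=t_{c_{j-1}}$, and the only bookkeeping is to read off from the page picture which connecting-annulus cores $c_k$ are isotopic to which binding-parallel curves $\delta$; these isotopies collapse repeated factors into the powers $(t_\delta)^2,(t_\delta)^3,(t_\delta)^4$ appearing in the statement, precisely as the identifications $c_1,c_2\simeq\delta^1_1$ did in the tetrahedral proof.

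For $b=2$ I would instead choose $\underline{m}$ with nontrivial multiplicities dictated by the branch weights, so that the central vertex becomes an $m_i$-fold cover and \eqref{third} renders the corresponding $F_i$ a torus; the resulting page $\Sigma$ is then a genus-one surface with $N=1,2,3,4$ boundary components in cases $(i)$--$(iv)$. The substantive step here is the monodromy, because the construction only specifies a power $(\phi|_U)^m$ as a boundary-parallel twist, so an $m$-th root must be extracted. I would handle each subcase with the matching $n$-holed torus relation---the one-holed relation \eqref{1holed} in case $(i)$, an appropriate two-holed relation among \eqref{2holed1}--\eqref{2holed3} in cases $(ii)$ and $(iii)$, and the three-holed relation \eqref{3holed2} in case $(iv)$---to rewrite the relevant power of $\phi$ as an explicit product of Dehn twists about the curves $\alpha_i,\beta$ of Figure~\ref{relations}, and then invoke the uniqueness of roots up to conjugation furnished by Theorem~\ref{mcg} to conclude that $\phi$ itself is the displayed word.

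The main obstacle I anticipate is exactly this root extraction in the $b=2$ family. One must verify that the product of core and binding twists produced by the gluing really is an $m$-th power of a standard torus word, and one must correctly match the abstract curves $\alpha_i,\beta,\delta_j$ of the torus relations with the concrete annulus cores lying on the page, since a misidentification would yield only a conjugate rather than the literal word claimed. Once this curve dictionary is pinned down and the appropriate relation applied, Theorem~\ref{mcg} removes all remaining ambiguity; the planar cases $b>2$ are by contrast routine gluing bookkeeping.
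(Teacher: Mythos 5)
Your proposal follows essentially the same route as the paper: all-ones multiplicity vectors $\underline{m}$ for $b>2$ yielding planar pages with the monodromy read off from annulus cores and their isotopies to boundary-parallel curves, and for $b=2$ nontrivial $\underline{m}$ producing a genus-one page, with the monodromy obtained by expressing a power of $\phi$ via the appropriate $n$-holed torus relation and then extracting the root using Theorem~\ref{mcg}. The paper carries out exactly this plan, writing case $(i)$ in detail in each regime and tabulating $\underline{m}$, $\underline{n}$ for the remaining subcases, so your outline is correct and not a different argument.
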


\begin{proof}
\textbf{Case $1$:} $b > 2$.

$(i)$ We choose $\underline{m} = (1,1,1,1,1,1,1)$ from which we find $\underline{n}= (1,0,0,b-3,0,1,1)$. Since $m_i=1$ for all $i$, the surface $F_i$ at the vertex $A_i$
is a sphere with $v_{i}+ n_{i}$ boundary components.
We connect these spheres $F_i$ and $F_j$ with an annulus if the vertices $A_i$ and $A_j$
are connected by an edge. It follows that the page of the open book is a sphere with
$n_1+\cdots + n_7= b$ boundary components (cf. Figure~\ref{octahedral_pl}$(i)$).
The monodromy $\phi$ restricted to each annuli is given as
\begin{enumerate}
  \item[$\bullet$] $\phi|_{U^{1}_{1}}= t_{\delta^{1}_{1}}$,
  \item[$\bullet$] $\phi|_{U^{4}_{j}}= t_{\delta^{4}_{j}}$, for $j=1, \ldots, b-3$,
  \item[$\bullet$] $\phi|_{U^{6}_{1}}= t_{\delta^{6}_{1}}$,
  \item[$\bullet$] $\phi|_{U^{7}_{1}}= t_{\delta^{7}_{1}}$,
  \item[$\bullet$] $\phi|_{U^{i,i+1}_{1}}= t_{c_{i}}$, for $i=1, \ldots, 5$, and
  \item[$\bullet$] $\phi|_{U^{4,7}_{1}}= t_{c_{6}}$.
\end{enumerate}

The curves $ c_1$, $c_2$ and $c_3$ are isotopic to $\delta^{1}_{1}$, $ c_3$ and $c_4$ are isotopic to $\delta^{6}_{1}$, and $ c_6$ is isotopic to $\delta^{6}_{1}$. From this we find that
\[
\phi= (t_{\delta^{1}_{1}})^{4} ( t_{\delta^{4}_{1}}\cdots t_{\delta^{4}_{b-3}} ) (t_{\delta^{6}_{1}})^{3} (t_{\delta^{7}_{1}})^{2}.
\]

Similarly, in order to construct the Milnor open books in part $(ii), (iii)$ and $(iv)$, we choose $\underline{m}$ and $\underline{n}$ as in the table below. Following the construction steps, one can easily get the open book stated in Proposition~\ref{prop:octahedral}.

\[
\begin{tabular}{|c|c|c|}
\hline
Part & $\underline{m}$ & $\underline{n}$\\
\hline
$(ii)$  & $(1,1,1,1,1,1) $ & $(1,0,0,b-3,2,1) $ \\
 \hline
$(iii)$  & $(1,1,1,1,1) $ & $(3,b-3,0,1,1) $ \\
 \hline
$(iv)$  & $(1,1,1,1) $ & $(3,b-3,2,1) $ \\
 \hline

\end{tabular}
\]

\textbf{Case $2$:} $b = 2$. Again we investigate each of the four cases.

This type of singularity has a graph $\Gamma$ given in Figure~\ref{octahedral}. For each vertex $A_{i}$ of $\Gamma$, there is a sphere with $v_{i} + n_{i}$ boundary components, where $v_i$ is the valency of the vertex $A_i$ and $n_i$ is calculated from equation (\ref{first}).

$(i)$ Choosing $\underline{m} = (1,2,3,4,3,2,2)$ gives $\underline{n}= (0,0,0,0,0,1,0)$. To construct the page, we plumb the surfaces $F_i$, according to the graph $\Gamma$. The surface $F_i$ is the $m_i$--cover of the sphere with $v_i+n_i$ boundary components. There are $  n_i + v_i $ boundary components of the surface $F_{i}$.
Hence, the surfaces $F_1$ and $F_7$ are spheres with one boundary component. $F_2$, $F_3$, $F_5$ and $F_6$ are spheres with two boundary components and
$F_3$ is a torus with three boundary components.
The page of the open book is constructed by connecting these surfaces with an annulus according to the graph $\Gamma$ (cf. Figure~\ref{octahedral_gen}$(i)$). Thus the page is a one--holed torus. The monodromy can easily be calculated by gluing the monodromy restricted to the annuli $U^{4,5}_{1}$, $U^{5,6}_{1}$  and $U^{6}_{1}$.
\[
\phi= (\phi|_{U^{4,5}_{1}} ) (\phi|_{U^{5,6}_{1}} ) (\phi|_{U^{6}_{1}}).
\]
From the calculations of Section~\ref{costruction},
\begin{enumerate}
  \item[$\bullet$] $\left(  \phi|_{U^{4,5}_{1}} \right)^{12} = t_{c_{4}}$,
  \item[$\bullet$] $\left(  \phi|_{U^{3,6}_{1}} \right)^6 = t_{c_{5}}$, and
  \item[$\bullet$] $\left(  \phi|_{U^{6}_{1}} \right)^2= t_{\delta^{6}_{1}}$.
\end{enumerate}
Since the curves $c_4$, $c_5$ and $\delta^{6}_{1}$ are isotopic, we may write
\begin{enumerate}
  \item[] $ \phi ^4 = \left( (\phi|_{U^{4,5}_{1}} ) (\phi|_{U^{5,6}_{1}} ) (\phi|_{U^{6}_{1}})  \right)^4 $ \\
  \item[] $ \ \ \ \ = \left( t_{\delta^{6}_{1}} \right)^{3} $.  \\
\end{enumerate}
By using the once-punctured torus relation (\ref{1holed}),
\begin{enumerate}
  \item[] $ \phi ^4 = \left( (t_{\alpha} t_{\beta}) ^6 \right)^{3}. $
\end{enumerate}
Using the braid relations we may write
\begin{enumerate}
  \item[] $ \phi ^4 = \left( t_{\beta} (t_{\alpha} t_{\beta}))^4 \right)^{4}. $
\end{enumerate}
Using Theorem~\ref{mcg}, we obtain the monodromy as
\begin{enumerate}
  \item[] $ \phi  =  t_{\beta} (t_{\alpha} t_{\beta})^4.  $
\end{enumerate}

In order to prove the rest of the proposition, we take $\underline{m}$ and $\underline{n}$ as in the table below. Then following the construction steps, braid relations and Theorem~\ref{mcg} one can get Milnor open book given in Proposition~\ref{prop:octahedral}.

\[
\begin{tabular}{|c|c|c|}
\hline
Part & $\underline{m}$ & $\underline{n}$\\
\hline
$(ii)$  & $(1,2,2,2,1,1)$ & $ (0,1,0,0,1,0)$ \\
 \hline
$(iii)$  & $ (1,2,2,1,1)$ & $ (2,0,1,0,0)$ \\
 \hline
$(iv)$  & $(1,2,1,1) $ & $(2,1,1,0) $ \\
 \hline

\end{tabular}
\]
\end{proof}

\begin{figure}[hbt]
 \begin{center}
    \includegraphics[width=12cm]{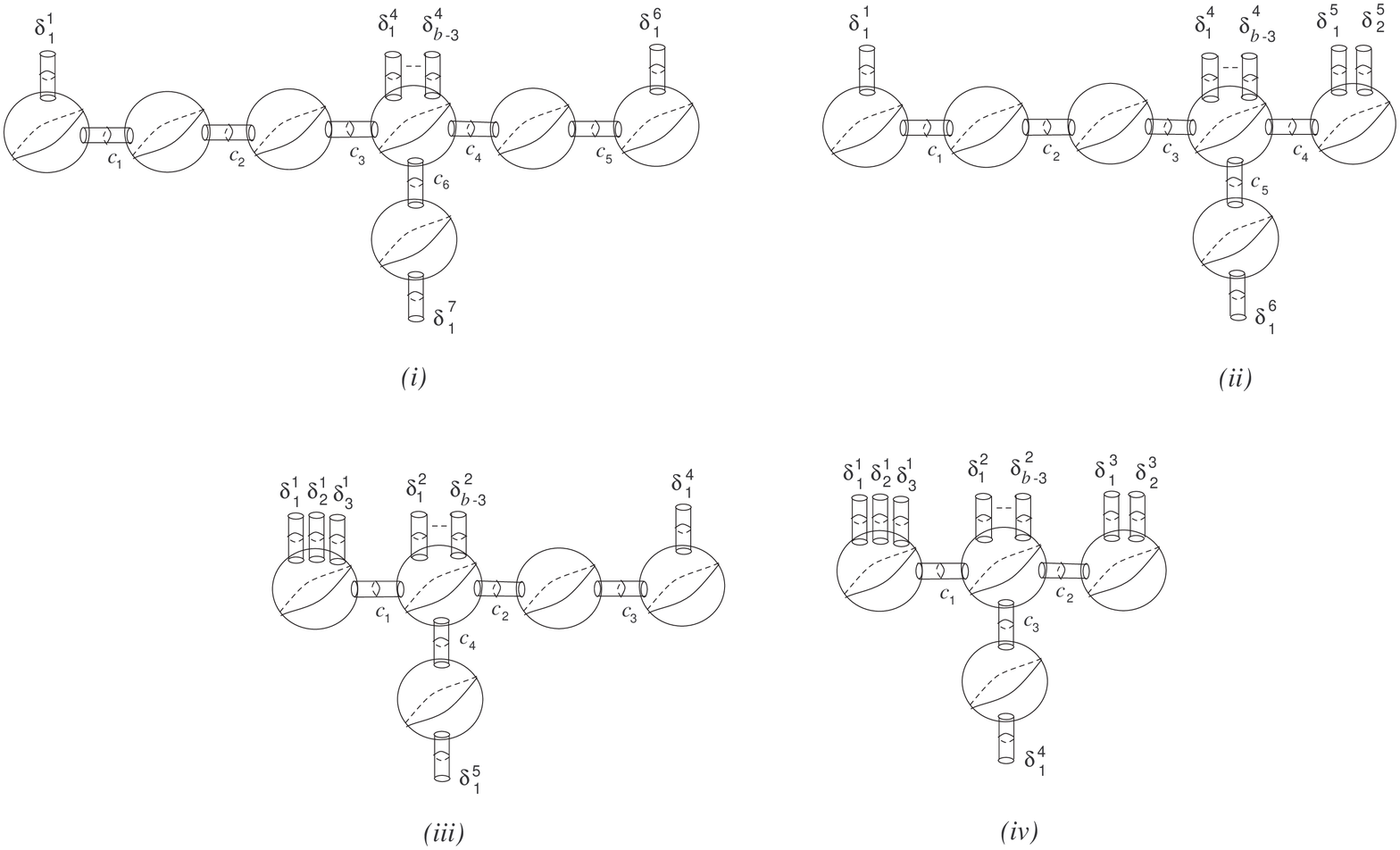}
  \caption{The page for octahedral singularities for $b>2$.}
  \label{octahedral_pl}
   \end{center}
 \end{figure}

\begin{figure}[hbt]
 \begin{center}
    \includegraphics[width=12cm]{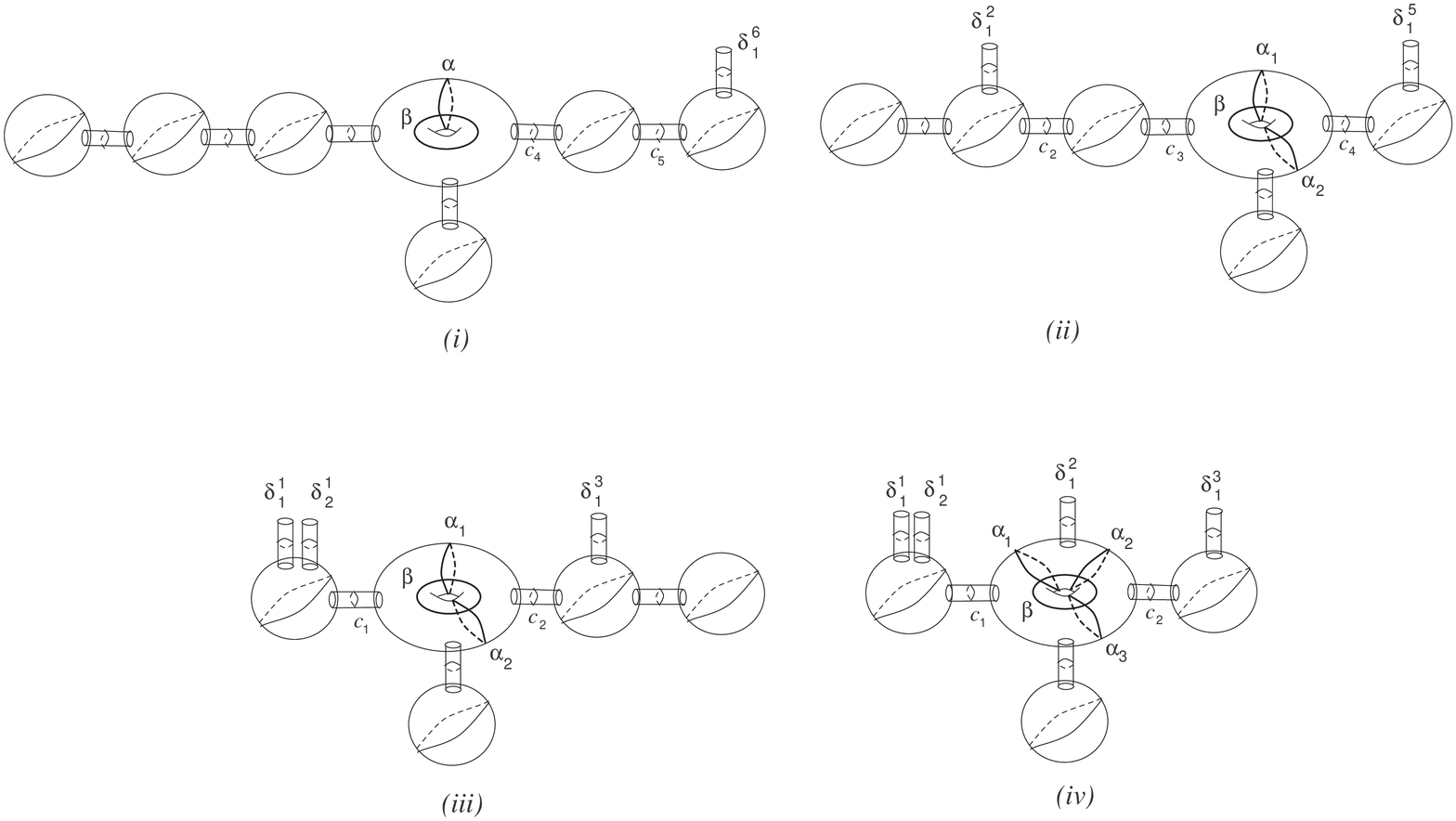}
  \caption{The page for octahedral singularities for $b=2$.}
  \label{octahedral_gen}
   \end{center}
 \end{figure}

\subsection{Icosahedral quotient singularities}
In this part, we write the Milnor open book decomposition supporting the unique Milnor fillable contact structure on the link of an icosahedral quotient singularity. Notice that for $b=2$, $(i)$ is the $E_8$ singularity, and its open book decomposition is constructed in~\cite{b}, and~\cite{eo}. We simplify the construction of~\cite{b}, and write it for the completeness of the paper.

\begin{proposition} \label{prop:icosahedral} {\rm \textbf{(Icosahedral quotient singularities)}}
The unique Milnor fillable contact structure on the link of an icosahedral quotient
singularity is supported by a planar open book decomposition if $b > 2$  and
a genus-$1$ open book decomposition if $b=2$. The number $N$ of
boundary components of the page and the monodromy $\phi$ are given as follows (cf.
Figure~\ref{icosahedral1}):
For $b > 2$
\[\displaystyle
(N,\phi) =\left\{
\begin{array}{ll}\displaystyle
    \left( b,\ \, (t_{\delta^{1}_{1}})^{5} ( t_{\delta^{5}_{1}}\cdots t_{\delta^{5}_{b-3}} ) (t_{\delta^{7}_{1}})^{3} (t_{\delta^{8}_{1}})^{2} \right) ,  & \mbox{ in the case } (i) \\
     \left( b+1,\ \, (t_{\delta^{1}_{1}} t_{\delta^{1}_{2}} ( t_{\delta^{3}_{1}}\cdots t_{\delta^{3}_{b-3}} ) (t_{\delta^{5}_{1}})^3 (t_{\delta^{6}_{1}})^2 (t_{c_{1}})^2\right), &  \mbox{ in the case } (ii) \\
    \left( b+2, \ \, (t_{\delta^{1}_{1}})^5 ( t_{\delta^{5}_{1}}\cdots t_{\delta^{5}_{b-3}} ) t_{\delta^{6}_{1}} t_{\delta^{6}_{2}} (t_{\delta^{7}_{1}})^{2} t_{c_{1}} \right), &
     \mbox{ in the case } (iii)\\
      \left( b+1, \ \, (t_{\delta^{1}_{1}})^2 t_{\delta^{2}_{1}} ( t_{\delta^{3}_{1}}\cdots t_{\delta^{3}_{b-3}} ) (t_{\delta^{5}_{1}})^{3} (t_{\delta^{6}_{1}})^{2} t_{c_{1}} \right), &
     \mbox{ in the case } (iv) \\
         \left( b+2,\ \, t_{\delta^{1}_{1}} t_{\delta^{1}_{2}} ( t_{\delta^{3}_{1}}\cdots t_{\delta^{3}_{b-3}}) t_{\delta^{4}_{1}} t_{\delta^{4}_{2}} (t_{\delta^{5}_{1}})^{2} (t_{c_{1}})^2 t_{c_{2}} \right) ,  & \mbox{ in the case } (v) \\
     \left( b+3,\ \, t_{\delta^{1}_{1}} t_{\delta^{1}_{2}} t_{\delta^{1}_{3}} t_{\delta^{1}_{4}} ( t_{\delta^{2}_{1}}\cdots t_{\delta^{2}_{b-3}} ) (t_{\delta^{4}_{1}})^3 (t_{\delta^{5}_{1}})^2  t_{c_{1}}\right), &  \mbox{ in the case } (vi) \\
    \left( b+2, \ \, (t_{\delta^{1}_{1}})^2 t_{\delta^{2}_{1}} ( t_{\delta^{3}_{1}}\cdots t_{\delta^{3}_{b-3}} ) t_{\delta^{4}_{1}} t_{\delta^{4}_{2}} (t_{\delta^{5}_{1}})^{2} t_{c_{1}}  t_{c_{2}}\right), &
     \mbox{ in the case } (vii)\\
      \left( b+4, \ \, t_{\delta^{1}_{1}} t_{\delta^{1}_{2}} t_{\delta^{1}_{3}} t_{\delta^{1}_{4}} ( t_{\delta^{2}_{1}}\cdots t_{\delta^{2}_{b-3}} ) t_{\delta^{3}_{1}}  t_{\delta^{3}_{1}} (t_{\delta^{4}_{1}})^{2} t_{c_{1}} t_{c_{2}}  \right), &
     \mbox{ in the case } (viii)
\end{array}
\right.
\]
and for $b = 2$
\[\displaystyle
(N,\phi) =\left\{
\begin{array}{ll}\displaystyle
    \left( 1,\ \,  (t_{\alpha} t_{\beta} )^{5} \right) , & \mbox{ in the case } (i) \\
     \left( 2,\ \, t_{\delta_{1}} (t_{\alpha_{1}} (t_{\alpha_{2}})^2 t_{\beta})^2 \right), &
     \mbox{ in the case } (ii) \\
    \left( 2,\ \, t_{\delta_{2}} t_{\alpha_{1}}t_{\alpha_{2}} (t_{\alpha_{1}}t_{\alpha_{2}} t_{\beta} t_{\alpha_{1}}t_{\alpha_{2}} t_{\beta} t_{\alpha_{2}} ) \right), &
     \mbox{ in the case } (iii)\\
      \left( 2,\ \, (t_{\delta_{1}})^2 (t_{\alpha_{1}}t_{\alpha_{2}} t_{\beta} t_{\alpha_{1}}t_{\alpha_{2}} t_{\beta} t_{\alpha_{2}}) \right), &
     \mbox{ in the case } (iv)\\
     \left( 4,\ \, t_{\delta_{1}} t_{\delta_{3}} (t_{\alpha_{1}}t_{\alpha_{2}} t_{\alpha_{3}} t_{\beta} t_{\alpha_{2}}t_{\alpha_{3}} t_{\beta} ) \right) , & \mbox{ in the case } (v) \\
     \left( 4,\ \,  t_{\delta_{1}} t_{\delta_{2}} (t_{\alpha_{1}}t_{\alpha_{2}} t_{\beta} t_{\alpha_{1}}t_{\alpha_{2}} t_{\beta} t_{\alpha_{2}}) \right), &
     \mbox{ in the case } (vi) \\
    \left( 3,\ \, (t_{\delta_{1}})^2 t_{\delta_{3}} (t_{\alpha_{1}}t_{\alpha_{3}} t_{\beta} t_{\alpha_{2}}t_{\alpha_{3}} t_{\beta} ) \right), &
     \mbox{ in the case } (vii)\\
      \left( 5,\ \, t_{\delta_{1}} t_{\delta_2} t_{\delta_{3}} t_{\delta_{5}} (t_{\alpha_{1}}t_{\alpha_{3}} t_{\beta} t_{\alpha_{2}}t_{\alpha_{3}} t_{\beta} ) \right), &
     \mbox{ in the case } (viii).
\end{array}
\right.
\]
\end{proposition}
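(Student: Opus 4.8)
The plan is to follow the same recipe used in Propositions~\ref{prop:cyclic}--\ref{prop:octahedral}, now applied to the eight resolution graphs of icosahedral type in Figure~\ref{icosahedral1}. For each case I would first read off the intersection matrix $I(\Gamma)$, then solve equation~\eqref{first} for the minimal positive vector $\underline{m}$ and the resulting $\underline{n}$, and finally assemble the page and monodromy from the surfaces $F_i$ and the annuli $U^i_t$, $U^{i,j}_l$ exactly as in Section~\ref{costruction}. The cleanest organization is to split, as in the octahedral proof, into the planar regime $b>2$ and the genus-one regime $b=2$, recording the chosen $\underline{m},\underline{n}$ for each subcase in a table and drawing the corresponding page.

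For $b>2$ the central vertex carries weight $-b<-2$, so the fundamental-cycle choice $\underline{m}=(1,\ldots,1)$ already yields $n_i\geq 0$. Since every $m_i=1$, each surface $F_i$ is by equation~\eqref{third} a sphere with $v_i+n_i$ boundary components, so after plumbing along the edges the page is planar with $N=\sum_i n_i$ boundary components. The monodromy is then the product of the boundary-parallel twists $t_{\delta^i_t}$ and the core twists $\phi|_{U^{i,j}_1}=t_{c_{j-1}}$; the displayed formulas follow once one observes which core curves $c_k$ are isotopic to binding-parallel curves $\delta^i_t$. These isotopies are precisely the ones occurring along the short $(-2)$-arms of the graph in Figure~\ref{icosahedral1}, and they collapse the product into the stated powers of single twists.

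For $b=2$ the central string is an $E_8$-type chain of $(-2)$-curves, and here I would choose $\underline{m}$ with nontrivial multiplicities (the fundamental cycle of the all-$(-2)$ graph), so that the high-valency vertex is covered by a one-holed torus. After plumbing, the page becomes a one-holed torus together with the extra boundary circles coming from the short arms, which gives the stated $N$. The key computational step is that the construction only pins down a power $\phi^{k}$ of the monodromy (with $k$ the relevant covering multiplicity) as an explicit product of Dehn twists supported on this torus. I would then recognize that product as the matching power of one of the torus relations~\eqref{1holed}--\eqref{4holed}, the appropriate relation being selected by the number of arms meeting the torus piece (so that the curves $\alpha_i,\beta,\delta_j$ of the relation are identified with the core and binding curves on the page). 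Finally, invoking Theorem~\ref{mcg} of Bonatti--Paris---a mapping class of a torus with boundary has at most one $m$-th root up to conjugation---forces the root read off from the torus relation to coincide with $\phi$, yielding the displayed monodromies.

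The main obstacle I expect is the $b=2$ bookkeeping. One must choose $\underline{m}$ so that the genus-one piece appears with exactly the right number of boundary circles, correctly match the core curves $c_k$ and binding curves $\delta^i_t$ on the page to the standard torus-relation curves, and identify $\phi^{k}$ with the precise power of the correct relation before applying Theorem~\ref{mcg}. This is delicate because the eight subcases differ both in the covering multiplicities and in the number and length of the short arms, so each requires its own page picture and its own verification that the root extracted from the relation is genuinely conjugate to the product produced by the construction; the planar cases $b>2$, by contrast, are routine once the isotopies among the $c_k$ and $\delta^i_t$ are recorded.
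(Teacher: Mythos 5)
Your proposal follows essentially the same route as the paper: for $b>2$ it takes $\underline{m}=(1,\dots,1)$, builds a planar page from spheres with $v_i+n_i$ boundary components, and collapses the monodromy via the isotopies among the $c_k$ and $\delta^i_t$; for $b=2$ it uses a nontrivial multiplicity vector, identifies a power of $\phi$ with a power of the appropriate torus relation, and extracts the root via Theorem~\ref{mcg}. The paper likewise records the case-by-case data for parts $(ii)$--$(viii)$ in tables of $\underline{m}$ and $\underline{n}$, so your plan matches its proof in both structure and the key inputs.
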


\begin{proof}

\textbf{Case $1$:} $b > 2$.

$(i)$ Following the steps in the construction of the open book, choosing $\underline{m} = (1,1,1,1,1,1,1,1) $, we get $\underline{n}= (1,0,0,0,b-3,0,1,1)$. Since $m_i=1$ for all $i$, the surface $F_i$ at the vertex $A_i$
is a sphere with $v_{i}+ n_{i}$ boundary components.
We connect these spheres $F_i$ and $F_j$ with an annulus if the vertices $A_i$ and $A_j$
are connected by an edge. It follows that the page of the open book is a sphere with
$b$ boundary components (cf. Figure~\ref{icosahedral_pl}$(i)$).
The monodromy of the open book is calculated by gluing the maps below:
\[
\phi= (\phi|_{U^{1}_{1}} ) \left( \phi|_{U^{1,2}_{1}} \cdots \phi|_{U^{6,7}_{1}} \right)  (\phi|_{U^{7}_{1}} ) (\phi|_{U^{8}_{1}} ) (\phi|_{U^{5,8}_{1}}) \left(\phi|_{U^{5}_{1}} \cdots \phi|_{U^{5}_{b-3}} \right).
\]

The monodromy $\phi$ restricted to each of these annuli is given by
\begin{enumerate}
  \item[$\bullet$] $\phi|_{U^{1}_{1}}= t_{\delta^{1}_{1}}$,
  \item[$\bullet$] $\phi|_{U^{5}_{j}}= t_{\delta^{5}_{j}}$, for $j=1, \ldots, b-3$,
  \item[$\bullet$] $\phi|_{U^{7}_{1}}= t_{\delta^{7}_{1}}$,
  \item[$\bullet$] $\phi|_{U^{8}_{1}}= t_{\delta^{8}_{1}}$,
  \item[$\bullet$] $\phi|_{U^{i,i+1}_{1}}= t_{c_{i}}$, for $i=1, \ldots, 6$, and
  \item[$\bullet$] $\phi|_{U^{5,8}_{1}}= t_{c_{7}}$.
\end{enumerate}

Note that the curves $ c_1$, $c_2$, $c_3$, $c_4$ are isotopic to $\delta^{1}_{1}$; $ c_5$, $c_6$ are isotopic to $\delta^{7}_{1}$; and $ c_7$ is isotopic to $\delta^{8}_{1}$. Hence the monodromy is

\[
\phi= (t_{\delta^{1}_{1}})^{5}  (t_{\delta^{7}_{1}})^{3} (t_{\delta^{8}_{1}})^{2} ( t_{\delta^{5}_{1}}\cdots t_{\delta^{5}_{b-3}} ) .
\]

In order to prove the other parts, we follow the construction steps given in Section~\ref{costruction}. Taking $\underline{m}$ and $\underline{n}$ as in the table below, one can get the desired open book.

\[
\begin{tabular}{|c|c|c|}
\hline
Part & $\underline{m}$ & $\underline{n}$\\
\hline
$(ii)$  & $(1,1,1,1,1,1)$ & $(2,0,b-3,0,1,1)$ \\
 \hline
$(iii)$  & $(1,1,1,1,1,1,1)$ & $(1,0,0,0,b-3,2,1)$ \\
 \hline
$(iv)$  & $(1,1,1,1,1,1)$& $(1,1,b-3,0,1,1)$ \\
 \hline
$(v)$  & $(1,1,1,1,1)$ & $(2,0,b-3,2,1)$ \\
 \hline
$(vi)$  & $(1,1,1,1,1)$ & $(4,b-3,0,1,1)$ \\
 \hline
$(vii)$  & $(1,1,1,1,1)$ & $(1,1,b-3,2,1)$ \\
 \hline
$(viii)$  & $(1,1,1,1)$ & $(4,b-3,2,1)$ \\
\hline

\end{tabular}
\]

\textbf{Case $2$:} $b = 2$.

$(i)$ The intersection matrix $I(\Gamma)$ of this singularity is
 \[
\begin{bmatrix}
 -2 & 1 & 0 & 0 &  0 & 0 &  0 & 0\\
  1 & -2& 1 & 0 &  0 & 0 &  0 & 0\\
  0 & 1 &-2 & 1 &  0 & 0 &  0 & 0\\
  0 & 0 & 1 &-2 &  1 & 0 &  0 & 0\\
  0 & 0 & 0 & 1 & -2 & 1 &  0 & 1\\
  0 & 0 & 0 & 0 &  1 &-2 &  1 & 0\\
  0 & 0 & 0 & 0 &  0 & 1 & -2 & 0\\
  0 & 0 & 0 & 0 &  1 & 0 &  0 & -2\\
\end{bmatrix}.
\]
In this case, we take $\underline{m}=(2,3,4,5,6,4,2,3)$.
From equation (\ref{first}), we find that $\underline{n}=(1,0,0,0,0,0,0,0)$. The page $\Sigma$ of the open book associated to $\underline{m}$ is a torus with two boundary components, built up as the union of five spheres with two boundary components, two spheres with one boundary component, a torus with three boundary components and eight annuli (see Figure \ref{icosahedral_gen}$(i)$). The monodromy restricted to each annulus is computed to be
\begin{enumerate}
  \item[$\bullet$] $(\phi|_{U^{1}_{1}})^2= t_{\delta^{1}_{1}}$,
  \item[$\bullet$] $(\phi|_{U^{1,2}_{1}}) ^{6}= t_{c_{1}}$,
  \item[$\bullet$] $(\phi|_{U^{2,3}_{1}}) ^{12}= t_{c_{2}}$,
  \item[$\bullet$] $(\phi|_{U^{3,4}_{1}}) ^{20}= t_{c_{3}}$,
  \item[$\bullet$] $(\phi|_{U^{4,5}_{1}}) ^{30}= t_{c_{4}}$.
\end{enumerate}
We get the monodromy $\phi$ of the open book by gluing these maps:
\[
\phi= ( \phi|_{U^{1}_{1}}) (\phi|_{U^{1,2}_{1}}) ( \phi|_{U^{2,3}_{1}}) ( \phi|_{U^{3,4}_{1}})(\phi|_{U^{4,5}_{1}}).
\]
The curves $c_{1}$, $c_{2}$, $c_{3}$ and $c_{4}$ are isotopic to $\delta^{1}_{1}$, and by using the above five equations, we may write
\[
( \phi )^6 = ( ( \phi|_{U^{1}_{1}}) (\phi|_{U^{1,2}_{1}}) ( \phi|_{U^{2,3}_{1}}) ( \phi|_{U^{6}_{1}}) ( \phi|_{U^{3,6}_{1}}))^6 =\left( t_{\delta^{1}_{1}}\right) ^5.
\]
Using the one--holed torus relation (\ref{1holed}),
we obtain
\[
( \phi )^6 = \left( t_{\delta^{1}_{1}}\right) ^5 =\left( t_{\alpha} t_{\beta} \right)^{30}.
\]
It follows now from Theorem \ref{mcg} that the monodromy $\phi$ of the open book is
\[
\phi=\left( t_{\alpha} t_{\beta} \right)^{5}.
\]

The rest of the proof is same as the part $(i)$. Choosing $\underline{m} $ and $\underline{n}$ as in table below, gives the open book decomposition stated in Proposition~\ref{prop:icosahedral}.

\[
\begin{tabular}{|c|c|c|}
\hline
Part & $\underline{m}$ & $\underline{n}$\\
\hline
$(ii)$  & $(1,2,3,2,1,2)$ & $(1,0,0,0,0,1)$ \\
 \hline
$(iii)$  & $(1,2,2,2,2,1,1)$ & $(0,1,0,0,0,1,0)$ \\
 \hline
$(iv)$  & $(1,1,2,2,1,1)$ & $(1,0,0,1,0,0)$ \\
 \hline
$(v)$  & $(1,2,2,1,1)$ & $(1,1,0,1,0)$ \\
 \hline
$(vi)$  & $(1,2,2,1,1)$ & $(3,0,1,0,0)$ \\
 \hline
$(vii)$  & $(1,1,2,1,1)$ & $(1,0,1,1,0)$ \\
 \hline
$(viii)$  & $(1,2,1,1)$ & $(3,1,1,0)$ \\
\hline

\end{tabular}
\]
\end{proof}

\begin{figure}[hbt]
 \begin{center}
    \includegraphics[width=12cm]{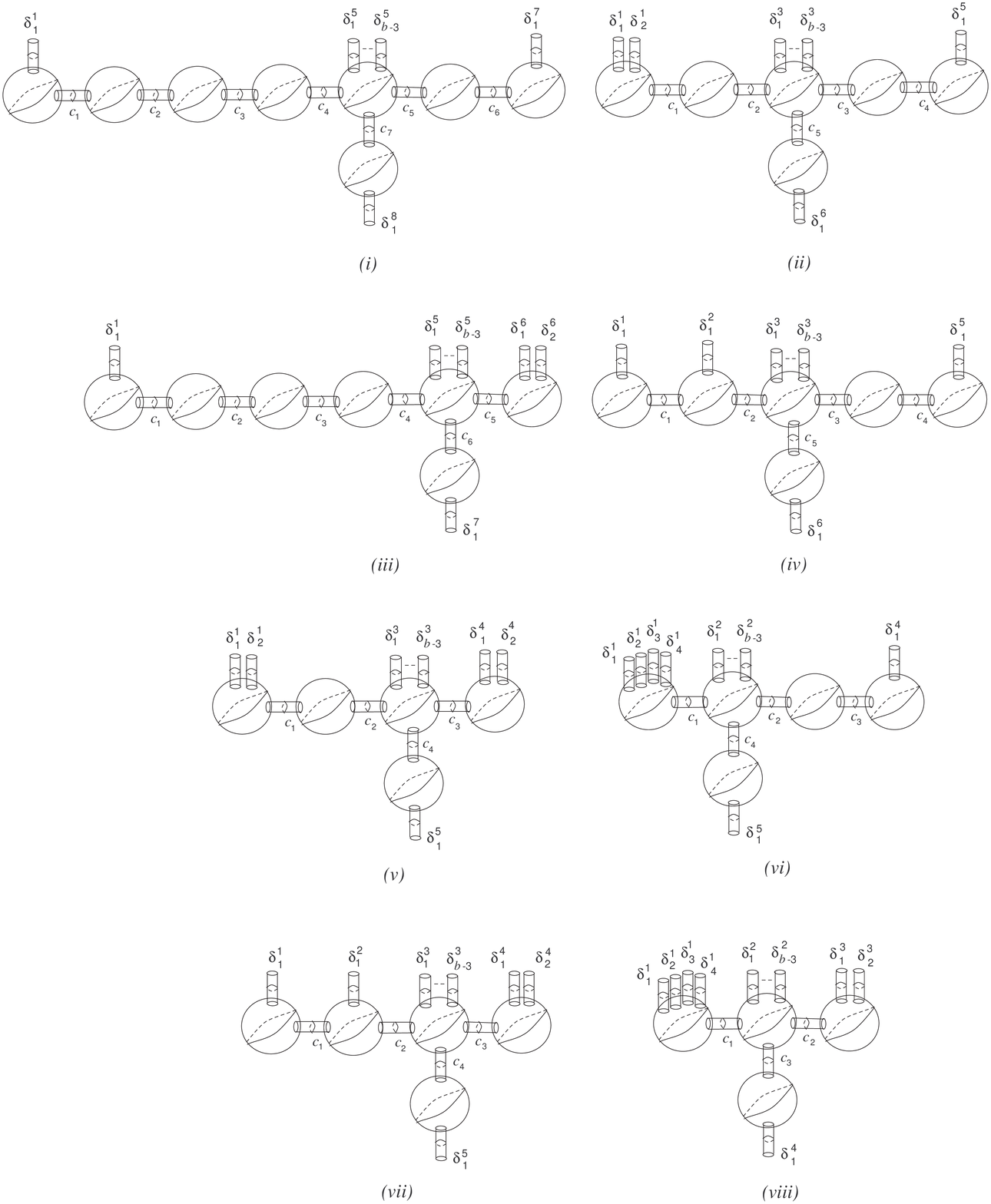}
  \caption{The page for icosahedral singularities for $b>2$.}
  \label{icosahedral_pl}
   \end{center}
 \end{figure}

\begin{figure}[hbt]
 \begin{center}
    \includegraphics[width=12cm]{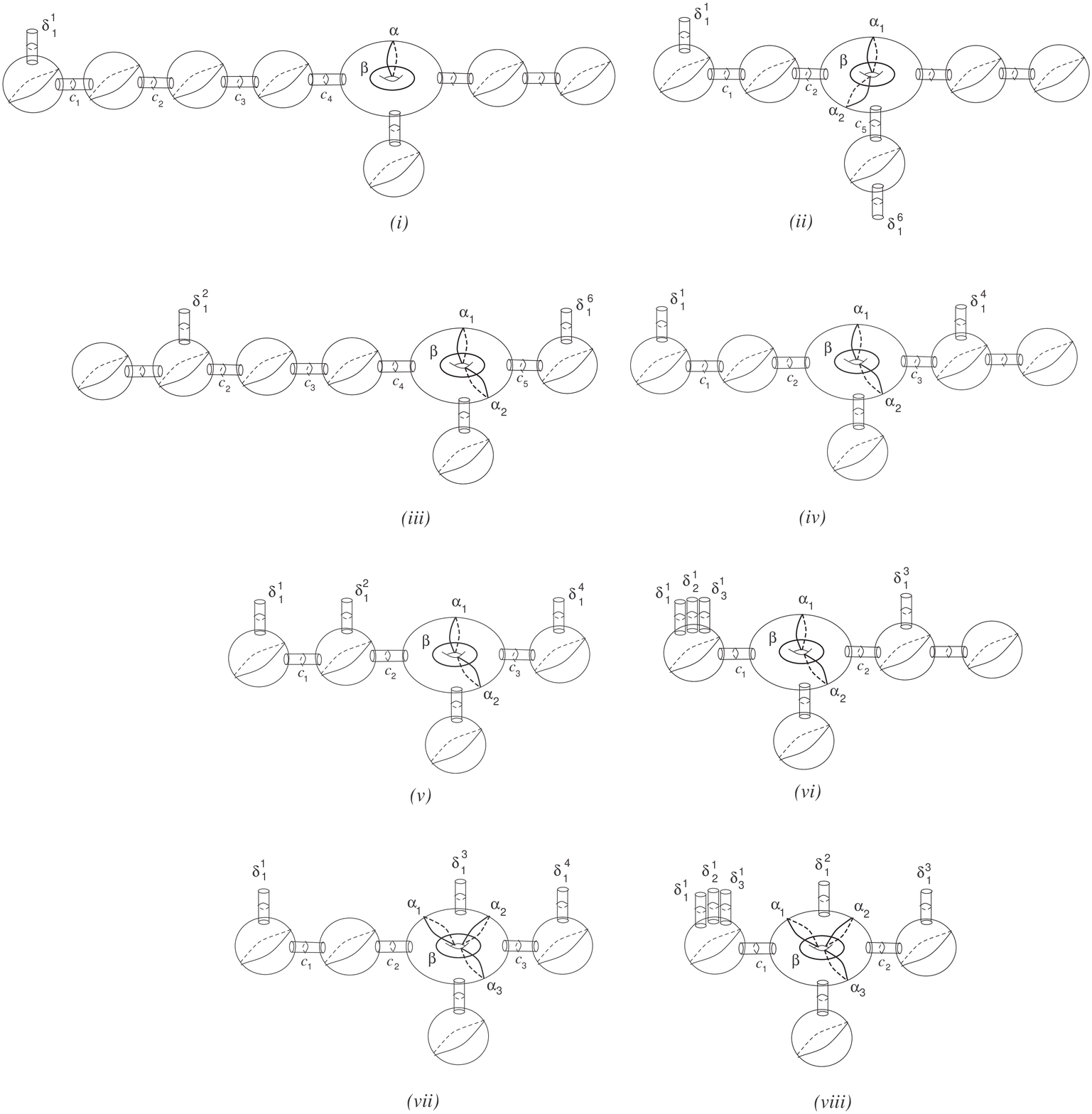}
  \caption{The page for icosahedral singularities for $b=2$.}
  \label{icosahedral_gen}
   \end{center}
 \end{figure}

\section{Proof of theorem \ref{mainth}}

In Section~\ref{lemmas} we have found the Milnor open book decompositions on the links of quotient surface singularities supporting the natural contact structure. Hence, we are able to say the following: The Milnor fillable contact structure on the link of a cyclic quotient surface singularity is supported by a planar open book (cf. Proposition~\ref{prop:cyclic}). Similarly, the natural contact structure on the links of other singularities in the case $b>2$ are all supported by planar open books (cf. Propositions~\ref{prop:dihedral}-\ref{prop:tetrahedral}-\ref{prop:octahedral}-\ref{prop:icosahedral}). Therefore, the support genus is the same as Milnor genus, which is zero, for these types.

In order to prove Theorem \ref{mainth}, we show that the unique Milnor fillable contact structure on the link of the quotient surface singularity cannot be supported by a planar open book for the following singularities: tetrahedral part $(i)$ for $b=2$; octahedral part $(i)$ for $b=2$, icosahedral part $(i)$ and $(ii)$ for $b=2$. These singularities have Milnor genus-$1$ open book decompositions, as shown in Section~\ref{lemmas}. Therefore, the Milnor genus is equal to the support genus for these types.

If X is a symplectic filling of a contact $3$-manifold $(M, \xi)$ and $\xi$ is supported by a
planar open book, then X can be embedded in $\#_{n} \overline{CP}^2$, connected sum of $n$ copies of $\overline{CP}^2$ by (the proof of) Theorem $1.2$ of \cite{e}. Hence, in order to show that the support genus of a symplectically fillable contact structure is positive, it suffices to show that
their symplectic fillings cannot be embedded in $\#_{n} \overline{CP}^2$.

  \begin{figure}[hbt]
 \begin{center}
    \includegraphics[width=4cm]{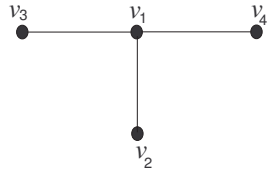}
  \caption{The intersection lattice $(\Z^{4},D_{4})$.}
  \label{fig1}
   \end{center}
 \end{figure}

Let $v_{1},v_{2},v_{3},v_{4}$ be the standard generators of the intersection lattice $(\Z^{4},D_{4})$ having
self-intersection $-2$, and $e_{1}, \ldots , e_{n}$ be the standard generators of $(\Z^{n},\D_{n}= \oplus _{n} \left\langle -1 \right\rangle)$ diagonal intersection lattice with self-intersection $-1$. By Lemma $3.1$ in \cite{l} (see also the proof of Theorem $4.2$ in \cite{ls}), there exists only one, up to composing with an
automorphism of $(\Z^{n},\D_{n})$, isometric embedding from $(\Z^{4},D_{4})$ to $(\Z^{n},\D_{n})$, which
sends $v_{1}$ to $e_{1} + e_{2}$, $v_{2}$ to $-e_{2} + e_{3}$, $v_{3}$ to $-e_{1} + e_{4}$ and $v_{4}$ to $-e_{2} -e_{3}$. The proof follows from the fact that, each $v_{i}$ has self-intersection $-2$, so that the image of $v_{i}$ under an isometric embedding must be of
the form $e_{j} + e_{k}$. From the intersection form of $D_{4}$, one can only get the above embedding (up to sign changes and permutations of generators of $(\Z^{n},\D_{n})$).

Let $L$ be any intersection lattice containing the sublattice with vertices $v_{1}, \ldots , v_{6}$ as
shown in Figure~\ref{fig2}, where $v_{1}, v_{2}, v_{3}, v_{4}$ have self-intersection $-2$. We prove for any
$n \geq 1$, there exists no isometric embedding from $L$ into $(\Z^{n},\D_{n})$.

  \begin{figure}[hbt]
 \begin{center}
    \includegraphics[width=8cm]{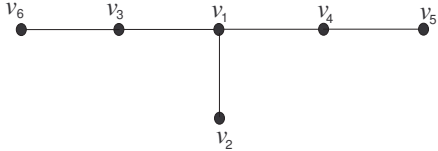}
  \caption{The sublattice.}
  \label{fig2}
   \end{center}
 \end{figure}

Suppose there exists such an isometric embedding $\varphi$. By the above discussion, we may assume that
\begin{itemize}
	\item $\varphi(v_{1})= e_{1} + e_{2} $,
	\item $\varphi(v_{2})= -e_{2} + e_{3} $,
	\item $\varphi(v_{3})= -e_{1} + e_{4} $, and
	\item$\varphi(v_{4})= -e_{2} - e_{3} $.
\end{itemize}
From the intersection form of $L$ one can see, that $v_{5}$ has an intersection with $v_{4}$. On the otherhand $v_{5}$
does not intersect $v_{2}$. Then one can get the equalities
below:
\begin{enumerate}
  \item[] $1 = \varphi(v_{5} \cdot v_{4}) = \varphi(v_{5}) \cdot \varphi(v_{4}) = \varphi(v_{5}) \cdot (-e_{2}- e_{3})$ and
  \item[] $0 = \varphi(v_{5} \cdot v_{2}) = \varphi(v_{5}) \cdot \varphi(v_{2}) = \varphi(v_{5}) \cdot (-e_{2}+e_{3})$.
\end{enumerate}
Hence we obtain
\begin{enumerate}
  \item[] $1 = \varphi(v_{5}) \cdot (-2e_{2})$,
\end{enumerate}
which is impossible.

Therefore, if one considers the intersection lattice $L$ as stated above and the natural
contact structure on the link of that plumbing, then its symplectic filling cannot be
embedded in $\#_{n} \overline{CP}^2$. So that contact structure cannot be supported by a planar open
book decomposition. Therefore the unique Milnor fillable contact structures on the
links of quotient surface singularities of tetrahedral part $(i)$ for $b=2$; octahedral part $(i)$ for $b=2$, icosahedral part $(i)$ and $(ii)$ for $b=2$ cannot be supported by planar
open book decompositions. These contact structures have support genus one.
For the remaining cases, we constructed minimal page-genus Milnor open books, and
the pages are all genus one surfaces. Hence, we conclude that support genus is at most
one for the corresponding contact structures. $\Box$

\begin{remark}
This method we used above, to prove the contact structures cannot be supported by planar open book decompositions, can be used to prove for some other symplectically fillable contact structures on different types of singularities/plumbings.

\end{remark}

\begin{remark}

Quotient surface singularities are rational surface singularities and the links of rational surface singularities are $L$-spaces. Hence we cannot use the obstructions in \cite{oss} for being supported by a planar open book decomposition.

\end{remark}

\begin{remark}
In this paper, we only investigate the Milnor open book decompositions supporting the canonical contact structure on the links of quotient surface singularities and the relation between the Milnor genus and the support genus. The relation between the binding number and support norm for this type of singularities could be understood with help of these Milnor open book decompositions.
\end{remark}

\end{document}